\numberwithin{equation}{section}
\newtheorem{theorem}{Theorem}[section]
\newtheorem{lemma}[theorem]{Lemma}
\newtheorem{proposition}[theorem]{Proposition}
\newtheorem{remark}[theorem]{Remark}
\newcommand{\R}{\mathbb{R}}
\newcommand{\N}{\mathbb{N}}
\newcommand{\C}{\mathbb{C}}
\newcommand{\cA}{{\mathcal A}}
\newcommand{\cB}{{\mathcal B}}
\newcommand{\cC}{{\mathcal C}}
\newcommand{\cG}{{\mathcal G}}
\newcommand{\cK}{{\mathcal K}}
\newcommand{\cL}{{\mathcal L}}
\newcommand{\cN}{{\mathcal N}}
\newcommand{\cP}{{\mathcal P}}
\newcommand{\cR}{{\mathcal R}}
\newcommand{\cS}{{\mathcal S}}
\newcommand{\beq}{\begin{equation}}
\newcommand{\eeq}{\end{equation}}
\newcommand{\bea}{\begin{eqnarray}}
\newcommand{\eea}{\end{eqnarray}}
\newcommand{\id}{\mathbb{I}}
\newcommand{\I}{\mathrm{i}}
\newcommand{\E}{\mathrm{e}}
\DeclareMathOperator{\Ai}{Ai}
\DeclareMathOperator{\imaginary}{Im}
\DeclareMathOperator{\dist}{dist}
\newcommand{\dlmf}[1]{%
\cite[%
  \def\nextitem{\def\nextitem{, }}%
  \@for \el:=#1\do{\nextitem\href{http://dlmf.nist.gov/\el}{(\el)}}%
]{dlmf}%
}
\author[M. Piorkowski]{Mateusz Piorkowski}
\address{Faculty of Mathematics\\ University of Vienna\\
Oskar-Morgenstern-Platz 1\\ 1090 Wien}
\email{\href{mailto:Mateusz.Piorkowski@univie.ac.at}{Mateusz.Piorkowski@univie.ac.at}}
\title[Riemann--Hilbert Theory without local Parametrix Problems]{Riemann--Hilbert Theory without local Parametrix Problems: Applications to Orthogonal Polynomials}
\date{}
\keywords{Riemann--Hilbert Theory, Orthogonal Polynomials, Random Matrices}
\subjclass{Primary  42C05, 60B20; Secondary 35Q15, 45E05}
\thanks{Research supported by the Austrian Science Fund (FWF) under Grant W1245.}
\begin{document}
\maketitle
\tableofcontents
\begin{abstract}
We study whether in the setting of the Deift--Zhou  nonlinear steepest descent method one can avoid solving local parametrix problems explicitly, while still obtaining asymptotic results. We show that this can be done, provided an a priori estimate for the exact solution of the Riemann--Hilbert problem is known. This enables us to derive asymptotic results for orthogonal polynomials on $[-1,1]$ with a new class of weight functions. In these cases, the weight functions are too badly behaved to allow a reformulation of a local parametrix problem to a global one with constant jump matrices. Possible implications for edge universality in random matrix theory are also discussed.
\end{abstract}

\section{Introduction}
\subsection{Background}
Local parametrix problems appear frequently in the context of the nonlinear steepest descent method for Riemann--Hilbert (R-H) problems, formulated by Deift and Zhou (\cite[Sect.~4]{DIZ}, \cite[Sect.~7]{DKMVZ2}, \cite{DZ}, \cite[Sect.~4]{DZPainleve}, for details see \cite{DZNLS}). Solutions to parametrix problems can determine either the leading asymptotics (\cite{DZNLS}, \cite{GT}, \cite{KT}), or contribute to higher-order corrections (\cite[Ch.~7]{PD}, \cite{DIZ}, \cite{DKMVZ2}, \cite{KMVV}). Interestingly, even in the second case one has to construct an explicit local parametrix solution to obtain rigorous leading asymptotics. Hence, the natural question arises, whether such a construction can be avoided. A positive answer would be useful in applications for local parametrix problems with no known solutions (see \cite{DC}, \cite[Sect.~5]{DKMVZ3}). 

In this paper we show how an explicit construction of a local parametrix solution can be avoided and use our method to obtain new error estimates for Plancherel--Rotach asymptotics of orthogonal polynomials. Asymptotics of orthogonal polynomials were studied thoroughly by Bernstein and Szeg\H{o} on the unit interval and unit circle (see \cite[Ch.~12]{GS} and references in therein). There has been renewed interested in these asymptotics motivated by the Wigner--Dyson--Mehta universality conjecture in random matrix theory (\cite{Dyson1962}, \cite{Dyson1970}, \cite{Mehta71}, \cite{Mehta}). In this setting orthogonal polynomials can be applied most naturally to unitary ensembles (\cite{BI}, \cite{PD}, \cite{DKMVZ3}, \cite{PS97}), but also to orthogonal and symplectic ensembles (\cite{DGBulk}, \cite{DGEdge}, \cite{SNW}, \cite{NW1}). The R-H formulation, first introduced by Fokas, Its and Kitaev (\cite{FIK2}, \cite{FIK1}), in conjunction with the nonlinear steepest descent method is particularly useful in this context. The nonlinear steepest descent method was applied to orthogonal polynomials on the real line by Bleher and Its in \cite{BI}, (see also \cite{DIZ}) and by Deift, Kriecherbauer, McLaughlin, Venakides and Zhou in \cite{DKMVZ3} and \cite{DKMVZ2} (for an introduction see the book by Deift \cite{PD}). Based on this work Kuijlaar, McLaughlin, Van Assche and Vanlessen computed in \cite{KMVV} the asymptotics of  polynomials orthogonal on $[-1,1]$ and related quantities. The leading asymptotic terms were already known \cite[Thm.~12.1.1--4]{GS}. However, the R-H analysis leads to more explicit error terms and in the case of \cite{KMVV} even an asymptotic expansion of the orthogonal polynomials was obtained. The follow-up paper \cite{KV} relates these results to bulk and edge universality in random matrix theory (see also \cite{SNW}). We comment more on this topic in relation to our results in the discussion section. 

As part of the R-H analysis performed in \cite{KMVV} one is confronted with local parametrix problems at $x = \pm1$. The solvability of these problems puts constraints on the local behaviour of the weight function at the corresponding endpoints. In particular, the authors considered the modified Jacobi weight function $\rho^{\alpha,\beta}_{Jac}$:
\begin{align} \label{Jacobi}
    \rho^{\alpha,\beta}_{Jac}(x) := (1-x)^\alpha(1+x)^\beta h(x), \quad x \in (-1,1), \ \alpha, \beta > -1,
\end{align}
where $h$ is strictly positive on $[-1,1]$ and assumed to have an analytic continuation to a neighbourhood of $[-1,1]$ (note the inclusion of the endpoints $\pm 1$). In our work we do not consider the prefactor $(1-x)^\alpha(1+x)^\beta$, but rather assume that our weight function $\rho$ has an analytic continuation only in a lense-shaped neighbourhood of $(-1,1)$, together with some growth conditions near $x = \pm 1$. The difference might seem to be minor, but the possibility that the weight function does not have an analytic continuation in a neighbourhood of $x = \pm 1$ makes the usual R-H analysis impossible (see \cite[Sect.~6]{KMVV}). In particular, the reformulation of the local parametrix problem to a global one with constant jump matrices relies on the local analyticity of the function $h$ in \eqref{Jacobi} around $x = \pm 1$.

As already mentioned, the methods described in this papers can be used for problems that do not have known parametrix solutions. This was the case in \cite[Sect.~5]{DKMVZ3}, where the authors had to rely on Fredholm index theory of singular integral operators (cf.~\cite{XZ}) and prove an additional uniqueness result.\footnote{See \cite{ParametrixFinal} for a similar application of Fredholm index theory to the R-H problem for the KdV equation.} Another recent example can be found in \cite{DC}, where the weight function $\rho_{\log}$ on $[-1,1]$ with a logarithmic singularity at $x = 1$ was considered:
\begin{align}
    \rho_{\log}(x) := \log \dfrac{2k}{1-x}, \quad x \in(-1,1), \ k > 1. 
\end{align}
While a parametrix solution has not been found, the authors managed to circumvent this issue through a comparison argument with the Legendre problem $(\rho_{Leg}(x) \equiv 1$). However, the analytic continuation of $\rho_{\log}$ around the point $x = 1$, which introduces an explicit jump condition on $(1, 1+\delta), \ \delta > 0$, has been crucial. The weights considered in this paper, are not required to have such analytic continuation around the endpoints. 

\subsection{Outline of this paper}
In the next section we set the stage by discussing the notion of approximating solutions to R-H problems. We then show that the construction of a local parametrix solution can be avoided, provided a certain a priori $L^p$-estimate of the exact solution to the global R-H problem and a regular enough model solution is known. Our method uses the connection between R-H problems and singular integral equations, which will be briefly summarized. 

In Section 3 we describe parametrix problems as they appear in practise, and analyse them using our method. We summarize our findings in Theorem \ref{mainTheorem} and also state Lemma \ref{extension} which will be crucial for obtaining the a priori $L^p$-estimate. 

 Following this, an application of our approach is presented for the case of orthogonal polynomials on the interval $[-1,1]$. We consider a new class of weight functions and obtain a bound of the error term in the pointwise asymptotics of the orthogonal polynomials, as the degree goes to infinity. We then elaborate on why the reformulation of the local parametrix problem to a global one with constant jump matrices, as performed in \cite[Sect.~6]{KMVV}, is not possible. Finally, we illustrate how our method uses the exact R-H solution as a local parametrix solution.

 In the discussion section we elaborate on connections with random matrix theory, in particular eigenvalue universality near the edge of the spectrum. Moreover, we mention the advantages and limitations of our approach and point towards future challenges related to obtaining the a priori $L^p$-estimate. 
 
 The first appendix contains a description of the Airy parametrix problem, including the heuristics by which the explicit parametrix solution can be found. In the second appendix it is shown that a local a priori $L^p$-estimate, instead of the one described in Section 3, is also sufficient. This is crucial in applications different than the one considered in this paper.
 
\section{Approximating solutions of R-H problems}
\subsection{Two R-H problems}
Consider a R-H problem with data $(v_\cS, \Sigma)$, meaning with jump matrix $v_\cS$ and jump contour $\Sigma$. We are looking for a matrix-valued function $S$, normalized at infinity, such that:
\vspace{10pt}
\begin{enumerate}[(i)]
\item $S(z)$ is analytic for $z \in \C \setminus \Sigma$,
\\
\item $S_+(k) = S_-(k)v_\cS(k)$, \ for $k \in \Sigma$,
\\
\item $S(z) = \id + O(z^{-1})$, \ as  $z \rightarrow \infty$.
\end{enumerate}
\vspace{10pt}
Note that condition (iii) is not specified by the data $(v_\cS, \Sigma)$ and has to be stated separately. Here $\Sigma$ is a 'sufficiently smooth' oriented contour and the $+ (-)$ sign corresponds to taking the left (right) limit to $\Sigma$. The precise conditions on $\Sigma$ and the sense in which the limits are taken can be found \cite{JL2}.

Next, consider a model R-H problem with data $(v_\cN, \Sigma^{mod})$ with  $\Sigma^{mod} \subseteq \Sigma$, and the same normalization at infinity as $S$. Hence we are looking for a matrix valued-function $N$, such that:
\vspace{10pt}
\begin{enumerate}[(i)] 
\item $N(z)$ is analytic for $z \in \C \setminus \Sigma^{mod},$
\\
\item $N_+(k) = N_-(k)v_\cN(k)$, \ for $k \in \Sigma^{mod},$
\\
\item $N(z) = \id + O(z^{-1})$, \ as $z \rightarrow \infty.$
\end{enumerate}
\vspace{10pt}
We refer to $S$ as the \emph{exact solution} and to $N$ as the \emph{model solution}.\footnote{Sometimes the term \emph{global parametrix solution} is used for $N$, which can be mistaken to be a solution to the local parametrix problem in its limiting global form, so we refrain from using it.}
The general question we try to answer in this section is the following:
\begin{center}
\begin{align} \label{question1}
\begin{minipage}{23em}
  Under what conditions is the model solution $N$ a good approximation to the exact solution $S$?
\end{minipage}
\end{align}
\end{center}
\vspace{10pt}
Before tackling \eqref{question1}, we need to answer the more basic question:
\begin{center}
\begin{align} \label{question2}
\begin{minipage}{23em}
  What does it mean for a model solution $N$ to be a good approximation to the exact solution to $S$?
\end{minipage}
\end{align}
\end{center}
\vspace{10pt}
The answer to question \eqref{question2} depends on the problem at hand. In the case of orthogonal polynomials pointwise estimates are of interest, meaning we would like $S(z)$ to be close to $N(z)$ for $z \in \C \setminus \Sigma$. In problems involving scattering theory we are interested in the first Laurent term of $S$ at infinity, meaning the complex number $\theta_\cS$ given by  
\beq
S(z) = \id + \dfrac{\theta_\cS}{z} + O(z^{-2}), \hspace{7pt} z \rightarrow \infty.
\eeq
Hence, we would like $\theta_\cN$ with
\beq
N(z) = \id + \dfrac{\theta_\cN}{z} + O(z^{-2}), \hspace{7pt} z \rightarrow \infty,
\eeq
to be close to $\theta_\cS$. 

Usually, the jump matrix $v_\cS$ (and sometimes $v_\cN$)\footnote{In the case of orthogonal polynomials $v_\cN$ is independent of the degree $n$. For integrable wave equations in the elliptic wave region, $v_\cN$ is periodic in the time parameter (see \cite{EPT}).} depends on some auxiliary continuous or discrete parameter. In the case of orthogonal polynomials this parameter is the polynomial degree $n$ and we demand for the $n$-dependent solution $S$:
\beq
S(z,n) = (\id + o(1))N(z), \hspace{7pt} z \in \C \setminus \Sigma, \quad n \rightarrow \infty.
\eeq
In the case of scattering theory this parameter, denoted by $t$, is time and we demand:
\beq
\theta_\cS(t) = \theta_\cN(t) + o(1), \hspace{7pt} t \rightarrow \infty,
\eeq
where the error term is a measure of the accuracy of the approximation. Having answered question \eqref{question2}, we now move to question \eqref{question1}. For this we need to reformulate a R-H problem as a singular integral equation. 

\subsection{Singular integral formulation of R-H problems}
 To find approximations to solutions of R-H problems as described in the last section, we need to reformulate a R-H problem as an equivalent singular integral equation. The underlying theory can be found in \cite{BC}, \cite{MUSK}, \cite{XZ}, for more recent developments see  \cite{JL2}, \cite{ParametrixFinal}. Let us define the Cauchy operator $\mathcal C^{\Sigma}$ associated to an oriented contour $\Sigma$:
\beq
\mathcal{C}^\Sigma: L^p(\Sigma) \rightarrow \mathcal{O}(\C \setminus \Sigma), \hspace{7pt} f \rightarrow \mathcal{C}^\Sigma(f)(z) := \dfrac{1}{2\pi \I} \int_\Sigma \dfrac{f(k)}{k-z} \hspace{3pt}dk, 
\eeq
with $p \in (1,\infty)$, which shall be assumed throughout this section. The only further requirement needed for $\mathcal C^\Sigma$ to be well-defined is that $(k-z)^{-1}$ is in $L^q(\Sigma)$ with $p^{-1} + q^{-1} = 1$ for some, and hence for every $z \in \C \setminus \Sigma$. Given some further regularity assumptions on $\Sigma$ which are fulfilled in most applications including ours \cite{JL2}, we can define two bounded operators given by:
\beq
\mathcal{C}^\Sigma_\pm: L^p(\Sigma) \rightarrow L^p(\Sigma), \hspace{7pt} f \mapsto \mathcal{C}^\Sigma_\pm(f)(k) := \lim\limits_{z \rightarrow k \pm} \mathcal{C}^\Sigma(f)(z),
\eeq
where the limits are assumed to be nontangential, in which case they exist a.e. on $\Sigma$. 

We now turn to a bijection between solutions of R-H problems and solutions of a certain singular integral equation. These results can be found in \cite{XZ}. We assume that $w_{\mathcal R} := v_{\mathcal R} - \id \in L^p(\Sigma)$\footnote{Usually it is assumed that $w_ \mathcal R \in L^\infty(\Sigma)$, which would imply $M_{w_\mathcal R}^\Sigma = L^p(\Sigma)$ and the boundedness of $\mathcal C_{w_\mathcal R}^\Sigma$. This assumption will not be needed and even violated in our application to orthogonal polynomials}, where we abuse notation by denoting as $L^p(\Sigma)$ the space of matrix functions with entries in $L^p(\Sigma)$ (the subscript $\mathcal R$ is choosen for later convenience). Associated to $w_\mathcal R$ and $\Sigma$, let $M_{w_\mathcal R}^\Sigma$ be the maximal domain of the multiplication operator defined by $w_\mathcal R$, meaning
\beq
M_{w_\mathcal R}^\Sigma := \lbrace f \in L^p(\Sigma) : fw_\mathcal R \in L^p(\Sigma) \rbrace.
\eeq
With this we can define the operator $\mathcal{C}_{w_\mathcal R}^\Sigma$ associated to a R-H problem with data $(v_\mathcal R,\Sigma)$:
\begin{align}
\begin{split}
\mathcal{C}_{w_\mathcal R}^{\Sigma}& \colon M_{w_\mathcal R}^\Sigma \rightarrow L^p(\Sigma), \hspace{7pt} f \mapsto \mathcal{C}^{\Sigma}_-(f w_\mathcal R). 
\end{split}
\end{align}
The operator $\mathcal{C}^\Sigma_-$ is evaluated componentwise for matrix inputs. To next proposition found in \cite[Prop. 3.3]{XZ} describes the correspondence between R-H problems and certain singular integral equation and is central for our approach.  
\begin{proposition}
Let $(v_\mathcal R,\Sigma)$ be the data of a R-H problem, and assume that $w_\mathcal R := v_\mathcal R-\id \in L^p(\Sigma)$. Then there is a bijection between R-H solutions $R$, satisfying
\beq
\lim_{z \rightarrow \infty} R(z) \rightarrow \id, \quad R_\pm - \id \in L^p(\Sigma)
\eeq
and solutions $\Phi \in M_{w_\mathcal R}^\Sigma$ of \beq \label{SIE}
(\id - \mathcal C_{w_\mathcal R}^\Sigma)\Phi = \mathcal C_-^\Sigma(w_\mathcal R).
\eeq
Moreover the relation between $R$ and $\Phi$ is given by
\begin{align}\label{RfromPhi}
R &= \id + \mathcal C^\Sigma((\Phi+\id)w_\cR),
\\\label{PhifromR}
\Phi &= R_- -\id.
\end{align}
\end{proposition}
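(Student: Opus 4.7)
The plan is to establish the bijection by constructing the two maps explicitly and verifying they are mutually inverse, relying essentially on the Plemelj--Sokhotski jump relation $\mathcal{C}_+^\Sigma - \mathcal{C}_-^\Sigma = \id$ and the boundedness of $\mathcal{C}_\pm^\Sigma$ on $L^p(\Sigma)$.

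For the forward direction, given an R-H solution $R$, I would define $\Phi := R_- - \id$. Two things must be checked. First, that $\Phi \in M_{w_\cR}^\Sigma$: from the jump condition $R_+ = R_-(\id+w_\cR)$ we obtain $R_- w_\cR = R_+ - R_-$, hence $\Phi w_\cR = (R_+-\id) - (R_- - \id) - w_\cR$, which lies in $L^p(\Sigma)$ by assumption on $R_\pm - \id$ and $w_\cR$. Second, since $R-\id$ is analytic off $\Sigma$, vanishes at infinity, and has $L^p$ boundary values whose difference is $R_+ - R_- = R_- w_\cR = (\Phi+\id)w_\cR$, the Cauchy integral representation gives $R-\id = \mathcal{C}^\Sigma((\Phi+\id)w_\cR)$, which is \eqref{RfromPhi}. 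Taking the $-$ boundary value and using linearity of $\mathcal{C}_-^\Sigma$ yields $\Phi = R_- - \id = \mathcal{C}_-^\Sigma(\Phi w_\cR) + \mathcal{C}_-^\Sigma(w_\cR) = \mathcal{C}_{w_\cR}^\Sigma \Phi + \mathcal{C}_-^\Sigma(w_\cR)$, i.e.\ $\Phi$ solves \eqref{SIE}.

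For the reverse direction, given $\Phi \in M_{w_\cR}^\Sigma$ solving \eqref{SIE}, I would define $R := \id + \mathcal{C}^\Sigma((\Phi+\id)w_\cR)$. Since $(\Phi+\id)w_\cR \in L^p(\Sigma)$, the function $R$ is analytic off $\Sigma$, tends to $\id$ at infinity, and by boundedness of $\mathcal{C}_\pm^\Sigma$ satisfies $R_\pm - \id \in L^p(\Sigma)$. The jump condition is the key point: by Plemelj--Sokhotski, $R_+ - R_- = (\Phi+\id)w_\cR$, while the $-$ boundary value combined with the integral equation gives $R_- - \id = \mathcal{C}_{w_\cR}^\Sigma \Phi + \mathcal{C}_-^\Sigma(w_\cR) = \Phi$. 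Substituting $\Phi = R_- - \id$ into $R_+ - R_- = (\Phi + \id)w_\cR$ produces $R_+ = R_- + R_- w_\cR = R_- v_\cR$, as required.

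Finally, the bijectivity is automatic from the computations above: the map $R \mapsto R_- - \id$ sends a solution $R$ to a $\Phi$ solving \eqref{SIE}, and the reconstruction $\id + \mathcal{C}^\Sigma((\Phi+\id)w_\cR)$ applied to this $\Phi$ returns $R$ by the Cauchy representation established in the forward step; conversely, starting from $\Phi$ and forming $R$, the reverse step shows $R_- - \id = \Phi$. The only delicate point I anticipate is the careful bookkeeping of $L^p$ versus $M_{w_\cR}^\Sigma$ membership (since the standing assumption is $w_\cR \in L^p$ rather than $L^\infty$, as the footnote emphasizes), and ensuring that the Cauchy integrals and their boundary values are all well-defined under exactly these hypotheses; once this is settled, the algebraic manipulations via Plemelj--Sokhotski are essentially mechanical.
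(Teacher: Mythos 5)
Your proposal is correct and follows essentially the same route as the paper: defining $\Phi = R_- - \id$, checking membership in $M_{w_\cR}^\Sigma$ via $\Phi w_\cR = R_+ - R_- - w_\cR$, recovering $R$ from the Cauchy integral of its additive jump, and reversing via Sokhotski--Plemelj. The delicate point you flag (justifying the Cauchy representation under only $L^p$ hypotheses) is handled in the paper at the same level of detail, by appeal to the additive Sokhotski--Plemelj formula.
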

\begin{proof}
Let $R$ be a solution of the R-H problem and define $\Phi := R_- - \id$. Then by assumption $\Phi \in L^p(\Sigma)$ and as
\beq
\Phi w_\mathcal R = (R_--\id)(v_\mathcal R-\id) = R_+ - R_- - w_\mathcal R \in L^p(\Sigma)
\eeq
we indeed see that $\Phi \in M_\mathcal R^\Sigma$.
Next, using the Sokhotski--Plemelj formula for additive R-H problems, we obtain from
\beq
R_+-R_- = R_- w_\mathcal R
\eeq
the equality
\beq \label{defS}
R = \id + \mathcal C^\Sigma(R_- w_\mathcal R).
\eeq
Taking the limit to the contour $\Sigma$ from the right, we get
\beq
R_- = \id + \mathcal C^\Sigma_{w_\cR}(R_-).
\eeq
which after substituting $R_- = \Phi+\id$ is equivalent to \eqref{SIE}. 

Next, let $\Phi \in M_{w_\mathcal R}^\Sigma$ satisfy \eqref{SIE} and define $R := \id + \mathcal C^\Sigma((\Phi+\id)w_\mathcal R)$. Note that from the assumptions on $\Phi$ and $w_\mathcal R$ it follows that $R(z) \rightarrow \id$, as $z \rightarrow \infty$. By the Sokhotski-Plemelj formula we have
\beq \label{R+R-}
R_+ - R_- = (\Phi+\id)w_\mathcal R.
\eeq
On the other hand we compute
\begin{align}
\begin{split}
R_- - \id &= \mathcal C^\Sigma_-((\Phi+\id)w_\mathcal R) 
\\
&= \mathcal C^\Sigma_{w_\mathcal R}(\Phi) + C^\Sigma_-(w_\mathcal R) 
\\
&= \Phi,
\end{split}
\end{align}
as $\Phi$ satisfies \eqref{SIE}. Substituting this into \eqref{R+R-} results in 
\beq
R_+ = R_- v_\mathcal R.
\eeq
Furthermore, we have
\begin{align}
\begin{split}
R_- - \id &= \Phi \in L^p(\Sigma)
\\
R_+ - \id &= \Phi w_\mathcal R + \Phi + w_\mathcal R \in L^   p(\Sigma).
\end{split}
\end{align}
Hence we see that $R$ is a solution of the R-H problem with the required properties and the proof is finished. 
\end{proof}
\subsection{Residual R-H problem}
Let us now return to the solutions $S$ and $N$ from the beginning of the section, under the assumption that $v_\cS$ and hence $S$ depends on a discrete parameter $n \in \mathbb{N}_0$. Moreover let $N$ and $N_\pm$ be invertible. As $v_\mathcal N = N_+ N_-^{-1}$, this implies that $v_\cN$ is also invertible.

We can now define a new matrix-valued function $R := S N^{-1}$. Assumings $\Sigma^{mod} \subseteq \Sigma$, we see that $R$ will have jumps only on $\Sigma$. We call $R$ the \emph{residual solution} and it satisfies the residual R-H problem with data $(v_\cR, \Sigma)$, where 
\beq
v_\cR := N_- v_\cS N_+^{-1}.
\eeq
Assuming that $R_\pm-\id \in L^p(\Sigma)$, we know from \eqref{RfromPhi} that it can be also written in integral form: 
\begin{align} 
R(z,n) &= \id + \mathcal C^\Sigma((R_- w_\cR)(z,n) 
\notag\\\label{R}
&= \id + \dfrac{1}{2\pi \I} \int_\Sigma \dfrac{R_-(k,n)w_\cR(k,n)}{k-z}\hspace{3pt} dk
\\
\nonumber
&= \id + \dfrac{1}{2\pi \I} \int_\Sigma \dfrac{S_-(k,n)(v_\cS(k,n)-v_\cN(k))N_+^{-1}(k)}{k-z}\hspace{3pt} dk,
\end{align}
with $w_{\cR} = v_{\cR} - \id$. The quantity of interest is the $n$-dependent $L^1(\Sigma)$-norm of the integrand without $(k-z)^{-1}$:
\beq \label{integrand}
\big \Vert S_- (v_\cS-v_\cN) N_+^{-1} \big \Vert_{L^1(\Sigma)}.
\eeq
Let us assume that \eqref{integrand} is of order $\varepsilon(n)$ where $\varepsilon: \mathbb{N}_0 \rightarrow \R_+$. Observe that in this case we have
\beq
R(z,n) = \id + O(\varepsilon(n)\dist(z,\Sigma)^{-1}),
\eeq
which implies
\beq
S(z,n) = (\id +O(\varepsilon(n)\dist(z,\Sigma)^{-1})N(z).
\eeq
Here $\dist(z,\Sigma)$ denotes the distance between $z$ and $\Sigma$. Hence, we see that in order to show convergence of $N$ to $S$ away from the contour $\Sigma$, we need to control \eqref{integrand}.

\section{Riemann--Hilbert problems in applications}
We now compare the setting described in the last section with R-H problems appearing in practise. In the case of orthogonal polynomials
(\cite{DKMVZ2},\cite{KMVV})  and scattering theory (\cite{RAREFACTION},\cite{ParametrixFinal}), the contour $\Sigma$ is a union of $\Sigma^{mod}$ and $\Sigma^{exp}$. The $n$-independent matrix $v_\cN$ is given by
\beq \label{vN}
v_\cN(k) =
\begin{cases}
v_\cS(k,n), &k \in \Sigma^{mod},
\\
\id, &k \in \Sigma^{exp} \setminus \Sigma^{mod}.
\end{cases}
\eeq
\\
 We see that $v_\cS(k,n)$ is therefore also $n$-independent for $k \in \Sigma^{mod}$. Moreover, let us assume that $\det v_\cS \equiv \det v_\cN \equiv 1$. This condition implies by Liouville's theorem that both solutions $S$ and $N$ are unique and $\det S \equiv \det N \equiv 1$ \cite{PD}.

 On $\Sigma^{exp}$ the jump matrix $v_\cS$ converges uniformly exponentially fast as to the identity matrix as $n \rightarrow \infty$, except in the vicinity of a finite number of points $\kappa \in \mathcal{K} \subset \Sigma^{exp}$. For each $\kappa \in \mathcal{K}$ the local behaviour of $w_\cS = v_\cS-\id$ is given by:
\beq \label{vkappa}
|w_\cS(k,n)| =O(\E^{-cn|k-\kappa |^\chi}),
\eeq
for two positive constants $c$ and $\chi$. Moreover, $v_\cN$ is uniformly bounded for $z \in \C \setminus \Sigma^{mod}$. The same holds true for the $n$-independent model solution $N$, except in the vicinities of the points $\kappa \in \mathcal{K}$, where $N$ it can have fourth-root singularities\footnote{In our application to orthogonal polynomials in the next section, this condition can be violated for certain singular weight functions.} 
\beq \label{Nkappa}
| N(z) | = O(| z - \kappa |^{-1/4}).
\eeq
The same is true for $N^{-1}$ as $\det N \equiv 1$. 

In applications a local parametrix problem has to be solved in vicinities of the points $\kappa \in \cK$. The value of $\chi$ is critical, as it determines the class of special functions from which the explicit parametrix solution can be constructed. For $\chi = 2$, these are the parabolic cylindrical functions, and this case occurs in the study of nonlinear integrable systems in the dispersive region (\cite{DZ}, \cite{DZNLS}, \cite{KamvissisToda}, \cite{KT}, \cite{JL1}). The cases $\chi = 1/2$ and $\chi = 3/2$ are common in the R-H analysis of orthogonal  polynomials. Measures with finite support, lead to the Bessel parametrix problem at the endpoints \cite{KMVV}, while exponential measures on on the real line lead to the Airy parametrix problem (\cite{BI}, \cite[Ch.~5]{PD}, \cite{DKMVZ3}, \cite{DKMVZ2},  \cite{ParametrixFinal}). Moreover, $\chi = 3/2$ is also related to the Painl\'eve II equation (\cite{DZPainleve}, \cite{BaikDeift}) and appears in the analysis of integrable systems with rarefaction/steplike initial data (\cite{RAREFACTION}, \cite{KotlyarovmKdV}, \cite{ParametrixFinal}). 

Let us now apply H{\"o}lder's inequality to
\eqref{integrand} assuming condition \eqref{vN}:
\begin{align} 
\big \Vert S_-(v_\cS-v_\cN)N_+^{-1} \big \Vert_{L^1(\Sigma)}
&=
\big\Vert S_-w_\cS N_-^{-1} \big \Vert_{L^1(\Sigma^{exp})}
\notag\\\label{CS}
&\leq \Vert S_- \Vert_{L^p(\Sigma^{exp})} 
\big \Vert w_\cS N_-^{-1} \big \Vert_{L^q(\Sigma^{exp})}.
\end{align}
Note that we made use of $v_\cN(k) = \id$ for $k\in \Sigma^{exp}$, and as $v_\cN(k) = v_\cS(k,n)$ for $k \in \Sigma^{mod}$, we only need to integrate over $\Sigma^{exp}$. Now, from the assumptions \eqref{vkappa} and \eqref{Nkappa}, it follows that
\beq \label{estimateN}
 \Vert k^i w_\cS(k,n) N_-^{-1}(k)  \Vert_{L^q(\Sigma^{exp})} = O(n^{\frac{1}{4\chi}-\frac{1}{q\chi }}), \quad q \in [1,4),\ i \in \mathbb{N}_0,
\eeq
where the main contributions come from the points $\kappa \in \mathcal{K}$. The motivation for including $k^i$ will become clear in the next theorem. Note that the condition on $q$ implies $p \in (4/3, \infty]$. 

We see that in order to guarantee that \eqref{CS} goes to $0$, it is sufficient to show that
\beq \label{apriori}
\Vert S_- \Vert_{L^p(\Sigma^{exp})} = O(n^r)
\eeq
with 
\beq
r < \frac{1}{q\chi }-\frac{1}{4\chi}.
\eeq
We call estimates of the form \eqref{apriori} a priori $L^p$-estimates, as they have to be established before an approximation for $S$ is known.\footnote{A priori $L^p$-estimates of solutions have been considered in R-H theory previously by Deift and Zhou \cite{DZaPriori} in their study of long-time asymptotics of solutions to the perturbed nonlinear Schr\"odinger equation on the real line.} We show in the next section that such estimates can be computed in the case of the R-H problem associated with orthogonal polynomials on the interval $[-1,1]$. 

We can now state the following theorem:
\begin{theorem} \label{mainTheorem}
Suppose $v_\cS, v_\cN, S, N$ satisfy \eqref{vN}, \eqref{vkappa}, \eqref{Nkappa}, \eqref{apriori} and $\Sigma = \Sigma^{mod}\cup \Sigma^{exp}$. Let 
\beq
s := \frac{1}{q\chi }-\frac{1}{4\chi}-r > 0, \quad l \in \mathbb{N}_0.
\eeq
Then
\beq \label{Sasymptotics1}
S(z,n) = (\id + O(n^{-s}dist(z,\Sigma^{exp})^{-1}))N(z).
\eeq
for $z \in \C \setminus \Sigma^{exp}$. Moreover,
\beq \label{Sasymptotics2}
S(z,n) = \Big(\id+\sum_{i = 1}^\ell \dfrac{\theta_i(n)}{z^i}+ O(z^{-\ell-1})\Big)N(z),
\eeq
for $z \rightarrow \infty$ such that $|1-k/z|\geq \varepsilon > 0$ for all $k \in \Sigma^{exp}$, with the matrices $\theta_i$ satisfying
\beq
| \theta_i(n) |_\infty = O(n^{-s}).
\eeq
\end{theorem}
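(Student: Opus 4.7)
The plan is to reduce everything to the residual R-H problem from the end of Section 2 and then estimate the resulting Cauchy integral by Hölder's inequality, exactly along the lines of \eqref{CS}. Concretely, I set $R := SN^{-1}$. By hypothesis \eqref{vN} the jump $v_\cR = N_- v_\cS v_\cN^{-1} N_-^{-1}$ equals the identity on $\Sigma^{mod}$, so $R$ has jumps only on $\Sigma^{exp}$, with $w_\cR = N_- w_\cS N_+^{-1}$ there (and $N_\pm = N$ off $\Sigma^{mod}$, so this is really $Nw_\cS N^{-1}$ on $\Sigma^{exp}\setminus\Sigma^{mod}$). The assumptions \eqref{vkappa}, \eqref{Nkappa} together with \eqref{apriori} and the Hölder step \eqref{CS} give $\Vert S_- w_\cS N_+^{-1}\Vert_{L^1(\Sigma^{exp})} = O(n^{-s})$, which in particular ensures that the integral representation \eqref{R} is valid. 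In the end I recover $S = RN$, so once $R = \id + O(\cdot)$ is controlled, \eqref{Sasymptotics1} and \eqref{Sasymptotics2} follow by multiplication by $N$ (which is absorbed into the leading factor).

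For \eqref{Sasymptotics1} I apply the trivial bound $|k-z|^{-1}\le \dist(z,\Sigma^{exp})^{-1}$ to the integrand in \eqref{R}, pull it out of the integral, and use the $L^1$-estimate just described. Hölder's inequality with exponents $p,q$ satisfying $p^{-1}+q^{-1}=1$ and $q\in[1,4)$, combined with \eqref{apriori} for $S_-$ and \eqref{estimateN} with $i=0$ for $w_\cS N_-^{-1}$, yields the factor $n^{r+\frac{1}{4\chi}-\frac{1}{q\chi}} = n^{-s}$. This is the first statement.

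For \eqref{Sasymptotics2} I use the standard finite geometric expansion
\[
\frac{1}{k-z} = -\sum_{i=1}^{\ell}\frac{k^{i-1}}{z^{i}} + \frac{k^{\ell}}{z^{\ell}(k-z)}
\]
inside the integral \eqref{R}. This extracts
\[
\theta_i(n) = -\frac{1}{2\pi\I}\int_{\Sigma^{exp}} k^{i-1}\,S_-(k,n)\,w_\cS(k,n)\,N_+^{-1}(k)\,dk,
\]
each of which I bound by the Hölder estimate
$\Vert S_-\Vert_{L^p(\Sigma^{exp})}\Vert k^{i-1}w_\cS N_+^{-1}\Vert_{L^q(\Sigma^{exp})} = O(n^{-s})$,
using \eqref{apriori} and \eqref{estimateN} with the $k^i$ factor (this is the reason it was included there). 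For the remainder, the assumption $|1-k/z|\ge\eps$ gives $|k-z|\ge \eps|z|$, so the last term contributes $O(z^{-\ell-1})$ after another application of Hölder to $\Vert k^{\ell}S_- w_\cS N_+^{-1}\Vert_{L^1(\Sigma^{exp})} = O(n^{-s})$; absorbing the $n^{-s}$ into the $O$-constant yields the claimed expansion. Finally, $S = (\id + (R-\id))N$ converts these bounds on $R$ into the stated bounds on $S$.

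The bulk of the work is routine once one commits to using $R := SN^{-1}$ and the Cauchy-integral representation; the only step that requires any care is verifying that the $L^1$-integrand makes sense in the first place, which is precisely why the a priori bound \eqref{apriori} was cooked up to match the local singularities in \eqref{vkappa} and \eqref{Nkappa} via the critical exponent $q<4$. The real substance of the theorem is therefore hidden in the hypothesis \eqref{apriori}; producing such an a priori estimate in concrete settings (as the author will do for orthogonal polynomials on $[-1,1]$ in the next section, aided by Lemma \ref{extension}) is the genuinely non-trivial issue, whereas the proof of the theorem itself is essentially the Hölder estimate \eqref{CS} packaged together with a geometric series.
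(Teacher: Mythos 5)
Your proposal is correct and follows essentially the same route as the paper: writing $R = SN^{-1}$, invoking the Cauchy-integral representation \eqref{R}, and combining the H\"older bound \eqref{CS} with \eqref{estimateN} and \eqref{apriori} to get the $O(n^{-s})$ factor, then inserting the partial geometric (Neumann) expansion of $(k-z)^{-1}$ for the $z\to\infty$ statement. Your version merely spells out the details (the explicit formula for $\theta_i(n)$, the bound $|k-z|\geq\eps|z|$ for the remainder) that the paper's two-sentence proof leaves implicit.
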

\vspace{5pt}
\begin{proof}
Equation \eqref{Sasymptotics1} follows from writing $S = RN$ and using the expression \eqref{R} for $R$, together with H\"{o}lder's inequality \eqref{CS} and the estimates \eqref{estimateN} and \eqref{apriori}. The asymptotic expansion follows analogously after substituting the partial Neumann series
\beq
\dfrac{1}{k-z} = -\sum_{i=1}^\ell\dfrac{k^{i-1}}{z^i}-\dfrac{k^{\ell}}{z^{\ell+1}}\dfrac{1}{1-k/z}.
\eeq
into the integrand in \eqref{R}.
\end{proof}
\begin{remark}
Instead of the a priori $L^p$-estimate \eqref{apriori}, local $L^p$-estimates around the points $\kappa \in \cK$ together with some auxiliary assumptions are sufficient for the conclusion of Theorem \ref{mainTheorem} to hold. For a proof and additional references, see Appendix B. 
\end{remark}
The next lemma tells us that a priori $L^p$-estimates of solutions to R-H problems can be uniformly extended to larger contours which are often introduced in the nonlinear steepest descent method. 
\begin{lemma} \label{extension}
Let $\Sigma$ and $\Gamma$ be oriented contours in $\hat \C := \C \cup \lbrace \infty \rbrace$. Assume that the Cauchy boundary operators $\mathcal{C}^{\Sigma \cup \Gamma}_\pm$ are well-defined and bounded on $L^p(\Sigma \cup \Gamma)$. Let $f \in \mathcal{O}(\hat \C \setminus \Sigma)$ be given, such that $f$ is continuous at infinity in the case that $\infty \in \Sigma$ and the $\pm$-limits on $\Sigma$ exist in the usual sense satisfying $f_+ - f_- \in L^p(\Sigma)$. Then 
\beq
\Vert f-f(\infty) \Vert_{L^p(\Gamma)} \leq C(\Gamma \cup \Sigma)\Vert f_+ - f_-\Vert_{L^p(\Sigma)} 
\eeq
for some positive constant $C(\Gamma  \cup \Sigma)$ independent of $f$.
\end{lemma}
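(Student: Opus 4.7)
The plan is to represent $f - f(\infty)$ as a Cauchy integral over $\Sigma$ of its jump $g := f_+ - f_-$, and then read off the $L^p(\Gamma)$-bound directly from the boundedness of $\mathcal{C}^{\Sigma \cup \Gamma}_+$ on $L^p(\Sigma \cup \Gamma)$.

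\emph{Step 1 (Cauchy representation).} First I would define $F(z) := f(z) - f(\infty) - \mathcal{C}^{\Sigma}(g)(z)$ on $\hat{\C}\setminus \Sigma$. The Sokhotski--Plemelj jump relation gives $\mathcal{C}^{\Sigma}(g)_+ - \mathcal{C}^{\Sigma}(g)_- = g = f_+ - f_-$ on $\Sigma$, so $F_+ = F_-$ a.e. on $\Sigma$. A standard removable-singularities / Morera argument (using $g \in L^p(\Sigma)$, which makes $\mathcal{C}^{\Sigma}(g)$ locally integrable across $\Sigma$) then promotes $F$ to an entire function on $\hat{\C}$. Since $\mathcal{C}^{\Sigma}(g)(z) \to 0$ as $z \to \infty$ by construction, and $f - f(\infty) \to 0$ at infinity (here the continuity-at-$\infty$ assumption is used precisely in the case $\infty \in \Sigma$), Liouville's theorem forces $F \equiv 0$, yielding
\begin{equation*}
f(z) - f(\infty) = \mathcal{C}^{\Sigma}(g)(z), \qquad z \in \hat{\C} \setminus \Sigma.
\end{equation*}

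\emph{Step 2 (enlargement of the contour).} Extend $g$ by zero to $\tilde g \in L^p(\Sigma \cup \Gamma)$. Away from $\Sigma \cup \Gamma$ one then has $\mathcal{C}^{\Sigma}(g)(z) = \mathcal{C}^{\Sigma \cup \Gamma}(\tilde g)(z)$ simply because the integrand of $\mathcal{C}^{\Sigma \cup \Gamma}(\tilde g)$ vanishes off $\Sigma$, and trivially $\|\tilde g\|_{L^p(\Sigma \cup \Gamma)} = \|g\|_{L^p(\Sigma)}$.

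\emph{Step 3 (boundary values on $\Gamma$).} Taking the $+$-limit with respect to the orientation of $\Sigma \cup \Gamma$ onto $\Gamma$: on $\Gamma \setminus \Sigma$ the function $f - f(\infty)$ is continuous across $\Gamma$ (it has no jump there), so the limit coincides a.e.\ with the genuine value of $f - f(\infty)$; on $\Gamma \cap \Sigma$ (which one may assume has one-dimensional measure zero under the standing regularity hypotheses needed to make $\mathcal{C}^{\Sigma \cup \Gamma}_\pm$ bounded) this set contributes nothing to the norm. Hence
\begin{equation*}
\|f - f(\infty)\|_{L^p(\Gamma)} = \|\mathcal{C}^{\Sigma \cup \Gamma}_+(\tilde g)\|_{L^p(\Gamma)} \leq \|\mathcal{C}^{\Sigma \cup \Gamma}_+\|_{L^p \to L^p}\,\|g\|_{L^p(\Sigma)},
\end{equation*}
so one may take $C(\Sigma \cup \Gamma) := \|\mathcal{C}^{\Sigma \cup \Gamma}_+\|_{L^p(\Sigma \cup \Gamma) \to L^p(\Sigma \cup \Gamma)}$, which is finite and independent of $f$ by hypothesis.

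\emph{Main obstacle.} The delicate point is Step 1: verifying the Cauchy representation from the sole assumption $f_+ - f_- \in L^p(\Sigma)$ requires care both with the removal of the apparent singularity of $F$ along $\Sigma$ (which is where the $L^p$-regularity of $g$ enters, via a Fubini/Morera argument on arbitrary small contours) and with the behaviour at $\infty$ when $\infty \in \Sigma$, since there one cannot simply argue via decay of the Cauchy kernel. Once this representation is established, Steps 2 and 3 are essentially bookkeeping built on the assumed boundedness of the Cauchy boundary operators.
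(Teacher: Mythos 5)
Your proposal is correct and follows essentially the same route as the paper: represent $f-f(\infty)$ as $\mathcal{C}^{\Sigma}(f_+-f_-)$, observe this equals $\mathcal{C}^{\Sigma\cup\Gamma}(f_+-f_-)$ since $f$ has no jump on $\Gamma\setminus\Sigma$, and then invoke the boundedness of $\mathcal{C}^{\Sigma\cup\Gamma}_\pm$ to take $C(\Sigma\cup\Gamma)=\Vert\mathcal{C}^{\Sigma\cup\Gamma}_\pm\Vert_{L^p(\Sigma\cup\Gamma)}$. The only difference is that you spell out the Liouville/Morera argument for the Cauchy representation in Step 1, which the paper simply cites as a standard property of the Cauchy integral operator.
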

\begin{proof}
Note that because of the conditions on $f$ it follows from the properties of the Cauchy integral operator that 
\beq
f - f(\infty)=  \mathcal{C}^{\Sigma}(f_+ - f_-) = \mathcal{C}^{\Sigma \cup \Gamma}(f_+ - f_-).
\eeq
where the last equality is true because $f_+ = f_-$ on $\Gamma \setminus \Sigma$, as $f \in \mathcal{O}(\hat \C \setminus \Sigma)$. Hence we conclude
\beq
\Vert f - f(\infty) \Vert_{L^p(\Gamma)} \leq \Vert \mathcal{C}^{\Sigma \cup \Gamma}_\pm \Vert_{L^p(\Sigma \cup \Gamma)} \Vert f_+ - f_- \Vert_{L^p(\Sigma \cup \Gamma)} = \Vert \mathcal{C}^{\Sigma \cup \Gamma}_\pm\Vert_{L^p(\Sigma \cup \Gamma)}\Vert f_+ - f_- \Vert_{L^p(\Sigma)}
\eeq
which shows that we can choose $C(\Sigma \cup \Gamma) = \Vert \mathcal{C}^{\Sigma \cup \Gamma}_\pm\Vert_{L^p(\Sigma \cup \Gamma)}$.
\end{proof}
\begin{remark}
Similar arguments work in the case that the R-H problem is not stated for a matrix $S$, but rather for a vector $s$ which is normalized to be $s_\infty$ at infinity. However, we still need a matrix-valued model solution $N$, which is normalized to the identity matrix at infinity. As before we define a vector-valued function $r := s N^{-1}$ which can be written in integral form
\beq
r(z,n) = s_\infty + \dfrac{1}{2\pi \I} \int_\Sigma \dfrac{s_-(k,n)(v_\cS(k,t)-v_\cN(k))N_+^{-1}(k,n)}{k-z} \hspace{3pt} dk.
\eeq
The rest of the analysis is analogous. An example of a vector-valued R-H problem comes from the inverse scattering transform of the KdV equation (\emph{\cite{EPT}, \cite{GT} \cite{ParametrixFinal}}).
\end{remark}
\section{Application to Orthogonal Polynomials on \texorpdfstring{$[-1,1]$}{}}
\subsection{Riemann--Hilbert formulation of orthogonal polynomials}
Let us consider an example for which our method can provide new results, namely the R-H problem for orthogonal polynomials on $[-1,1]$. We assume that the corresponding measure $d\mu$ on $[-1,1]$ is absolutely continuous and thus can be written as
\beq
d\mu(x) = \rho(x) \hspace{1pt} dx, \quad x \in (-1,1),
\eeq 
for some real-valued function $\rho \geq 0$. 
Following \cite{FIK2}, \cite{FIK1} the R-H problem characterizing the $n$-th orthogonal polynomial is stated as follows:
\\
\\
For any $n \in \N_0$ find a $2 \times 2$ matrix-valued function $X$ on $\C \setminus [-1,1]$, such that:
\vspace{10pt}
\begin{enumerate}[(i)]
\item $X(z,n)$ is analytic in for $z \in \C \setminus [-1,1]$,
\\
\item $X_+(x,n) = X_-(x,n) \begin{pmatrix}
1  & \rho(x)
\\
0 & 1
\end{pmatrix}, \quad$ for $x \in (-1,1),$
\\
\item $X(z,n) = (\id + O(z^{-1}))\begin{pmatrix}
(2z)^n & 0
\\
0 & (2z)^{-n}
\end{pmatrix}$, \quad as $z \rightarrow \infty.$
\end{enumerate}
\vspace{10pt}
A Liouville type argument shows that if a solution exists with square integrable singularities at $x = \pm 1$, it is necessary unique \cite{PD}. The following theorem explains how the R-H solution $X$ is related to orthogonal polynomials:
\begin{theorem}{(Fokas, Its, Kitaev (\emph{\cite{FIK2},\cite{FIK1}}))} \label{orthogonalMatrix}
The unique solution with square integrable singularities at $x = \pm 1$ of the R-H problem for orthogonal polynomials is given by
\beq \label{X}
X(z,n) = 
\begin{pmatrix}
p_n(z) & \mathcal{C}^{(-1,1)}(p_n \rho)(z)
\\
\eta_{n-1} p_{n-1}(z) & \eta_{n-1} \mathcal{C}^{(-1,1)}(p_{n-1} \rho)(z)
\end{pmatrix}
\eeq
where $p_n(z)$ is the $n$-th orthogonal polynomial with leading coefficient $2^n$ and
\beq
\eta_n := -\pi \I\Vert p_n \Vert^{-2}_{L^2((-1,1), \rho(x) dx)}.
\eeq
\end{theorem}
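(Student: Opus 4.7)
The plan is to verify directly that the matrix $X$ defined by \eqref{X} satisfies conditions (i)--(iii) of the R-H problem with square integrable singularities at $\pm 1$; uniqueness was already flagged in the paragraph preceding the statement, so it suffices to exhibit a solution. First I would address analyticity: both $p_n,p_{n-1}$ are entire, and the Cauchy integrals $\mathcal C^{(-1,1)}(p_j\rho)$ are holomorphic on $\C\setminus[-1,1]$ provided $p_j\rho\in L^1((-1,1))$, which is guaranteed by the hypothesis that orthogonal polynomials exist up to degree $n$. For the jump (ii), the polynomial entries are continuous across $(-1,1)$, and Sokhotski--Plemelj gives
\[
\mathcal C^{(-1,1)}_+(p_j\rho)(x)-\mathcal C^{(-1,1)}_-(p_j\rho)(x)=p_j(x)\rho(x),
\]
which directly produces the jump matrix $\begin{pmatrix}1&\rho\\0&1\end{pmatrix}$ upon computing $X_-^{-1}X_+$.

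The heart of the proof is the normalization (iii), which is where the orthogonality relations enter. I would expand the Cauchy kernel geometrically at infinity,
\[
\mathcal C^{(-1,1)}(p_j\rho)(z)=-\frac{1}{2\pi\I}\sum_{m=0}^{\infty}z^{-m-1}\int_{-1}^{1}k^{m}p_j(k)\rho(k)\,dk,
\]
and read off the first nonvanishing coefficient using orthogonality. For $j=n$ all coefficients with $m<n$ vanish, yielding $\mathcal C^{(-1,1)}(p_n\rho)(z)=O(z^{-n-1})$, which matches the $(1,2)$-entry of $(\id+O(z^{-1}))\,\mathrm{diag}((2z)^n,(2z)^{-n})$. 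For $j=n-1$, writing $k^{n-1}=2^{-(n-1)}p_{n-1}(k)+(\text{lower degree})$ and using orthogonality on the lower-degree part gives
\[
\mathcal C^{(-1,1)}(p_{n-1}\rho)(z)=-\frac{\|p_{n-1}\|^{2}}{2\pi\I\cdot 2^{n-1}}\,z^{-n}+O(z^{-n-1}),
\]
so multiplication by $\eta_{n-1}=-\pi\I\,\|p_{n-1}\|^{-2}$ produces exactly $(2z)^{-n}+O(z^{-n-1})$, which is the leading behaviour demanded in the $(2,2)$-entry. Since $p_n(z)=(2z)^n+O(z^{n-1})$ and $\eta_{n-1}p_{n-1}(z)=O(z^{n-1})=(2z)^n\cdot O(z^{-1})$, the off-diagonal $(2,1)$-entry contributes only to the $O(z^{-1})$-correction, and the full asymptotic (iii) follows.

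Finally, I would check the square integrability condition near $\pm 1$. The polynomial entries are smooth, so only the Cauchy transforms matter; standard boundary behaviour of $\mathcal C^{(-1,1)}(f)$ for $f\in L^2$ (or, more generally, under whatever mild integrability hypotheses on $\rho$ are tacitly assumed) gives at worst an $L^2_{\mathrm{loc}}$ singularity at the endpoints, which suffices. I expect the main obstacle to be purely bookkeeping in the computation of the $(2,2)$-leading coefficient: keeping track of the power of $2$ coming from the leading coefficient $2^{n-1}$ of $p_{n-1}$ and the sign conventions of $\mathcal C^{(-1,1)}$ must exactly reproduce the definition of $\eta_{n-1}$, and this is really the calculation that motivates the precise form of that constant.
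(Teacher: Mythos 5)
The paper offers no proof of this theorem --- it is quoted from Fokas--Its--Kitaev, with uniqueness covered by the Liouville-type argument mentioned in the preceding paragraph --- so your direct verification is exactly the standard argument, and it is correct: the jump follows from Sokhotski--Plemelj applied to the second column, and your bookkeeping in the $(2,2)$-entry (the factor $2^{-(n-1)}$ from the leading coefficient of $p_{n-1}$ combining with $\eta_{n-1}=-\pi\I\,\Vert p_{n-1}\Vert^{-2}$ to produce precisely $(2z)^{-n}$) checks out, as does the endpoint estimate $|\mathcal{C}^{(-1,1)}(f)(z)|=O(\dist(z,(-1,1))^{-1/p})$ for $f\in L^p$, $p>1$, which gives locally square integrable singularities. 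The only omission is the degenerate case $n=0$, where $p_{-1}$ is undefined and the second row must be taken to be $(0,\ 1)$ by convention; for $n\geq 1$ your argument is complete.
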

Usually the R-H problem for orthogonal polynomials is normalized without the factor $2^{\pm n}$ at infinity, in which case the $p_n(z)$ would be the monic orthogonal polynomials. The next theorem found in \cite[Thm.~12.7.1]{GS} explains this discrepancy:
\begin{theorem} \label{szego}
Assume the weight $\rho(x)$ on $(-1,1)$ satisfies the Szeg\H{o} condition, given by
\beq
\int_{-1}^1 \dfrac{\log \rho(x)}{\sqrt{1-x^2}} \hspace{3pt} dx > -\infty.
\eeq 
Then
\beq
\lim\limits_{n \rightarrow \infty} \Vert p_n \Vert_{L^2([-1,1], \rho(x) dx)} = \sqrt{\pi} \exp \Big( \dfrac{1}{2\pi} \int_{-1}^1 \dfrac{\log \rho(x)}{\sqrt{1-x^2}} \hspace{3pt} dx \Big).
\eeq
\vspace{1pt}
\end{theorem}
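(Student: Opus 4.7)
The plan is to reduce the claim to Szegő's classical theorem on the unit circle via the Joukowski substitution $x = \cos\theta$, $z = \E^{\I\theta}$. Writing $\tilde p_n := 2^{-n}p_n$ for the monic counterpart, we have $\norm{p_n}_{L^2((-1,1),\rho dx)}^2 = 4^n \norm{\tilde p_n}_{L^2((-1,1),\rho dx)}^2$, and the latter factor equals the extremal value
\[
h_n := \min\Big\{\int_{-1}^{1} P(x)^2 \rho(x) dx : P \text{ monic of degree } n\Big\}
\]
by the standard variational characterization of orthogonal polynomials. It therefore suffices to show $4^n h_n \to \pi\exp\bigl(\tfrac{1}{\pi}\int_{-1}^{1}\tfrac{\log \rho(x)}{\sqrt{1-x^2}}dx\bigr)$, whose square root equals the stated limit.

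Under the Joukowski map a monic polynomial $P$ of degree $n$ in $x$ corresponds to a palindromic Laurent polynomial $P(\cos\theta) = 2^{-n}z^{-n}Q(z)$, where $Q$ has degree $2n$, satisfies the reciprocal symmetry $Q(z) = z^{2n}Q(z^{-1})$, and has constant term $Q(0) = 1$. The integral pulls back to $h_n = 4^{-n}\cdot \tfrac{1}{2}\int_{-\pi}^{\pi}|Q(\E^{\I\theta})|^2 w(\theta)\,d\theta$ with circle weight $w(\theta) := \rho(\cos\theta)|\sin\theta|$. The Szegő condition on $\rho$ and the absolute integrability of $\log|\sin\theta|$ imply $\log w \in L^1(-\pi,\pi)$, so Szegő's theorem on the circle applies to $w$.

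I would next invoke Szegő's circle theorem: the minimum of $\int_{-\pi}^{\pi}|R(\E^{\I\theta})|^2 w(\theta) d\theta/(2\pi)$ over monic polynomials $R$ of degree $n$ converges to
\[
|D(0)|^2 = \exp\Big(\tfrac{1}{2\pi}\int_{-\pi}^{\pi}\log w(\theta)\,d\theta\Big),
\]
where $D$ is the outer (Szegő) function associated to $w$. Because $w$ is even in $\theta$, a symmetrization argument identifies the minimum over the palindromic $Q$ of the previous step with twice (in the limit) the unconstrained circle minimum, up to explicit constants. Computing $\tfrac{1}{2\pi}\int_{-\pi}^{\pi}\log|\sin\theta|\,d\theta = -\log 2$ and $\tfrac{1}{2\pi}\int_{-\pi}^{\pi}\log\rho(\cos\theta) d\theta = \tfrac{1}{\pi}\int_{-1}^{1}\tfrac{\log\rho(x)}{\sqrt{1-x^2}}dx$ and chasing the factors $4^{-n}$, $\tfrac{1}{2}$, $2\pi$, and the symmetrization constant yields $4^n h_n \to \pi\exp\bigl(\tfrac{1}{\pi}\int_{-1}^{1}\tfrac{\log\rho}{\sqrt{1-x^2}}dx\bigr)$, as desired.

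The main obstacle is Szegő's circle theorem itself, whose proof factors $w = |D|^2$ almost everywhere on $\T$ for the outer function $D$ and reduces the $L^2$-extremal problem to orthogonally projecting $1$ onto the closed span of $\{z,z^2,\ldots\}$ in $L^2(\T, w\,d\theta/(2\pi))$; this requires only $\log w \in L^1$ but relies on the inner/outer decomposition of Hardy-space functions. The remaining work — the interval-to-circle bookkeeping, the handling of the palindromic constraint via the evenness of $w$, and the final constant chase — is routine but must be carried out carefully to recover the precise prefactor $\sqrt{\pi}$ and the exponent coefficient $\tfrac{1}{2\pi}$ in the statement. I would not attempt to reprove Szegő's circle theorem here but rather cite it from \cite[Ch.~12]{GS}.
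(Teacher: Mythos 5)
The paper contains no proof of this statement to compare against: it is imported verbatim from Szeg\H{o}'s book, cited as \cite[Thm.~12.7.1]{GS}. Your sketch is, in outline, exactly the classical proof of that cited theorem — variational characterization of the monic norm $h_n$, Joukowski pull-back to the circle weight $w(\theta)=\rho(\cos\theta)|\sin\theta|$, and Szeg\H{o}'s theorem on the circle — and your constants check out: with $G(w):=\exp\bigl(\tfrac{1}{2\pi}\int_{-\pi}^{\pi}\log w\,d\theta\bigr)$ your two computed integrals give $G(w)=\tfrac12\exp\bigl(\tfrac1\pi\int_{-1}^1\tfrac{\log\rho}{\sqrt{1-x^2}}dx\bigr)$, and since $4^nh_n=\pi\cdot\min_Q\tfrac{1}{2\pi}\int|Q|^2w\,d\theta$ over the constrained class, your claim that this constrained minimum tends to $2G(w)$ (twice the unconstrained limit) yields precisely $4^nh_n\to\pi\exp\bigl(\tfrac1\pi\int\bigr)$, whose square root is the stated limit. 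Two points deserve more care than you give them. First, $\log w\in L^1(-\pi,\pi)$ requires not only the Szeg\H{o} condition (which bounds $\int\log^-$) but also $\int\log^+\rho(\cos\theta)\,d\theta<\infty$; this follows from finiteness of the measure via $\log^+ t\le C_\eps t^\eps$ and H\"older, but it is not automatic from what you wrote. Second, and more substantively, the ``symmetrization argument'' is the actual content of the interval-to-circle transfer and does not follow from evenness of $w$ alone: writing a palindromic $Q=\phi+\phi^*$ with $\phi^*(z)=z^{2n}\phi(1/z)$, the identity $|\phi^*|=|\phi|$ on the unit circle produces the two main terms, but controlling the cross term $\tfrac{1}{2\pi}\int\phi\,\overline{\phi^*}\,w\,d\theta$ — in the upper-bound direction, where the natural trial function $Q=\phi_{2n}+\phi_{2n}^*$ has $Q(0)=1-\alpha_{2n-1}\neq 1$ and must be renormalized, and especially in the lower-bound direction over \emph{all} admissible $Q$ — requires the OPUC machinery for even weights: real Verblunsky coefficients with $\alpha_n\to0$ under the Szeg\H{o} condition, i.e.\ the Szeg\H{o} mapping developed in \cite[Ch.~11--12]{GS}. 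Calling this step ``routine'' bookkeeping understates it; as written it is asserted rather than proved. Since you explicitly defer to Szeg\H{o} for the hard analysis, the proposal is an accurate reconstruction of the standard proof rather than a flawed argument, but a complete write-up would either carry out the cross-term estimates or simply cite \cite[Thm.~12.7.1]{GS} directly, as the paper itself does.
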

Hence, we see that with our normalization, the $L^2((-1,1), \rho(x)dx)$-norm of $p_n$ converges as $n$ goes to infinity. This uniform boundedness of the norm is similar, though not identical, to the a priori $L^p$-estimate needed for our approach to local parametrix problems. The difference is the measure $\rho(x) dx$, as we need a priori $L^p$-estimates in the space $L^p((-1,1), dx)$. 
\subsection{Large \texorpdfstring{$n$}{} limit without a parametrix solution}
In \cite{DKMVZ2} the authors were considering orthogonal polynomials with an exponential weight of the form $\E^{-Q(x)}$ on the real line, where $Q(x)$ was a polynomial of even degree and positive leading coefficient. To extract asymptotic results, they introduced a series of transformations of R-H problems and their solutions, starting with the solution 
\beq
Y := 2^{-n\sigma_3}X, \quad \sigma_3 := 
\begin{pmatrix}
1 & 0
\\
0 & -1
\end{pmatrix},
\eeq
for monic polynomials:
\beq \label{YUTSR}
Y \longrightarrow U \longrightarrow T \longrightarrow S \longrightarrow R
\eeq
In each step a conjugation or deformation step has been performed to obtain a new R-H problem. An analogous  procedure has been done by Kuijlaars et al.~in \cite{KMVV} for the modified Jacobi weight function $w^{\alpha,\beta}$ on $[-1,1]$ (see \eqref{Jacobi}). In this case the R-H problem for $U$ was not needed. We will not repeat the steps \eqref{YUTSR} here, but rather simply define the R-H problem for $S$ and $N$, for details see \cite{KMVV}. 

In our setting, we will assume that $\rho$ has an analytic continuation to a lense-shaped neighbourhood $\mathcal L$ of $(-1,1)$ as shown in Figure \ref{L}:
\vspace{-20pt}
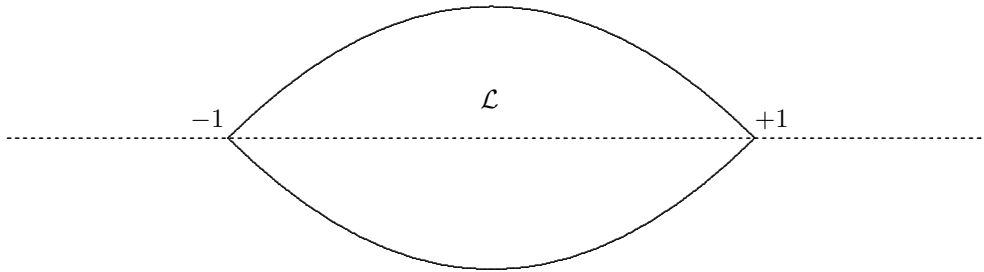
\begin{figure}[H]
\begin{picture}(7,5.2)

\curve(0,2.5, 3.5,0.75, 7,2.5)
\curve(0,2.5, 3.5,4.25, 7,2.5)

\curvedashes{0.05,0.05}
\curve(-3,2.5, 10,2.5)

\put(7, 2.65){$+1$}
\put(-0.5, 2.65){$-1$}
\put(3.35,2.9){$\mathcal L$}
\end{picture}
\vspace{-20pt}
\caption{\label{L} Neighbourhood $\mathcal L$.}
\end{figure}
Moreover, $|\rho^{\pm 1}|$ should remain bounded in $\mathcal L$, except possibly near the points $z = \pm 1$, where it can have the behaviour
\beq \label{condw}
|\rho(z)| = O(|z \pm 1|^{-1/\nu_++\varepsilon}), \quad |\rho(z)^{-1}| = O(|z \pm 1|^{-1/\nu_-+\varepsilon}),
\eeq
for two constants $\nu_\pm > 1$ and some $\varepsilon > 0$. This implies that for any smooth curve $\gamma$ going through $\mathcal L$ from $-1$ to $1$, we have
\beq
\Vert \rho \Vert_{L^{\nu_+}(\gamma,  dz)} < \infty, \quad \mbox{and} \quad \Vert \rho^{-1} \Vert_{L^{\nu_-}(\gamma,  dz)} <  \infty.
\eeq
The R-H problem for $S$ is defined on the contour $\Sigma := \Sigma_1 \cup \Sigma_2 \cup \Sigma_3$ as shown in Figure \ref{jumpcontour}.
\begin{figure}[H] 
\begin{picture}(7,5.2)
\put(0,2.5){\line(1,0){7.0}}
\put(3.5,2.5){\vector(1,0){0.2}}

\put(3.5,3.5){\vector(1,0){0.2}}
\put(3.5,1.5){\vector(1,0){0.2}}

\curve(0,2.5, 3.5,1.5, 7,2.5)
\curve(0,2.5, 3.5,3.5, 7,2.5)

\put(3.5, 3.7){$\Sigma_1$}
\put(3.5, 2.75){$\Sigma_2$}
\put(3.5, 1.7){$\Sigma_3$}

\put(5.5, 4.3){$\Omega_1$}

\put(5.1, 2.8){$\Omega_2$}

\put(5.1, 2){$\Omega_3$}
\curvedashes{0.05,0.05}
\curve(-3,2.5, 0,2.5)
\curve(7,2.5, 10,2.5)
\curve(0,2.5, 3.5,0.75, 7,2.5)
\curve(0,2.5, 3.5,4.25, 7,2.5)

\put(7, 2.65){$+1$}
\put(-0.5, 2.65){$-1$}
\put(2.4, 3,6){$\mathcal L$}
\end{picture}
\vspace{-20pt}
\caption{\label{jumpcontour} Jump contour for $S$.}
\end{figure}
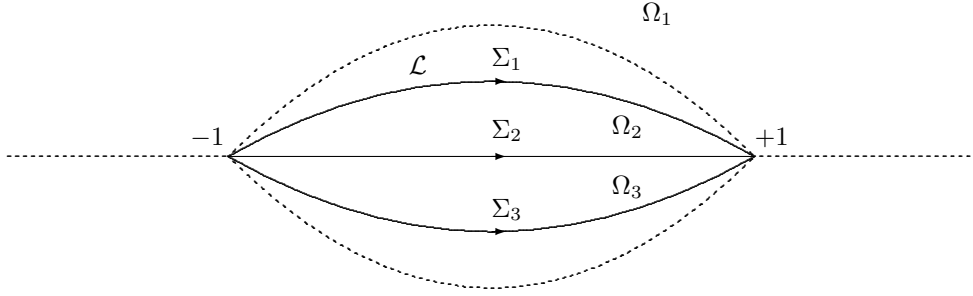
Here the region enclosed by $\Sigma_1$ and $\Sigma_2$ is denoted by $\Omega_2$ and assumed to be a subset of $\cL$, analogously for $\Omega_3$. Moreover, we set $\Omega_1 := \C \setminus \Omega_2 \cup \Omega_3$. 
The corresponding R-H problem for $S$ is now stated as follows: 
\\
\\
For any $n \in \mathbb{N}$ find a $2 \times 2$ matrix-valued function $S$ such that:
\vspace{10pt}
\begin{enumerate}[(i)]
\item $S(z,n)$ is analytic for $z \in \C \setminus \Sigma$,
\\
\item $S_+(k,n) = S_-(k,n) v_\cS(k,n),$ \ for $k \in \Sigma$
with:
\beq \label{jumpS}
v_\cS(k,n) =
\begin{cases}
\begin{pmatrix}
1 & 0
\\
\rho(k)^{-1} \varphi(k)^{-2n} & 1
\end{pmatrix}, \quad \mbox{for }k \in \Sigma_1 \cup \Sigma_3,
\\
\begin{pmatrix}
0 & \rho(k)
\\
-\rho(k)^{-1} & 0
\end{pmatrix}, \hspace{24pt} \mbox{for } k \in \Sigma_2 = (-1,1),
\end{cases} 
\eeq
\\
\item $S(z,n) = \id + O(z^{-1}), \hspace{7pt} \mbox{as } z \rightarrow \infty$,
\end{enumerate}
\vspace{10pt}
with at most square integrable singularities at $z = \pm 1$. Here
\beq
\varphi(z) := z + \sqrt{z^2-1}
\eeq
maps $\C \setminus [-1,1]$ biholomorphically to the exterior of the unit disc. In particular $|\varphi(z)| > 1$ for $z \in \C \setminus [-1,1]$ and $|\varphi(z)| \rightarrow 1$ as $z$ approaches $[-1,1]$. Looking at the jump matrix \eqref{jumpS} and observing that
\beq
\log \varphi(z) = O(\sqrt{|z\mp1|}), \quad \mbox{as} \ z \rightarrow \pm 1,
\eeq
we identify $z = \pm 1$ as the oscillatory points with $\chi = 1/2$ (cf.~\eqref{vkappa}). 

The solution $S$ is related to the solution $X$ from Theorem \ref{orthogonalMatrix} via
\beq
S(z,n) = 
\begin{cases}
X(z,n) \varphi(z)^{-n\sigma_3}, \quad z \in \Omega_1,
\\
X(z,n) \varphi(z)^{-n\sigma_3}
\begin{pmatrix}
1 & 0
\\
-\rho(z)^{-1} \varphi(z)^{-2n} & 1
\end{pmatrix},
\quad z \in \Omega_2,
\\
X(z,n) \varphi(z)^{-n\sigma_3}
\begin{pmatrix}
1 & 0
\\
\rho(z)^{-1} \varphi(z)^{-2n} & 1
\end{pmatrix}, \hspace{8pt} \quad z \in \Omega_3.
\end{cases}
\eeq
and satisfies
\beq \label{limitS}
\lim_{z\rightarrow\infty} S(z,n) = \lim_{z\rightarrow \infty} X(z,n)\varphi^{-n\sigma_3} = \id.
\eeq
The corresponding model problem has the same normalization at infinity, but only a jump condition on $(-1,1)$, 
\beq
N_+(x) = N_-(x)
\underbrace{
\begin{pmatrix}
0 & \rho(x)
\\
-\rho(x)^{-1} & 0
\end{pmatrix}}_{=:v_\cN(x)}, \quad x \in (-1,1),
\eeq
and is independent of $n$. The explicit solution given in \cite[Eq.~5.5]{KMVV} has the form
\beq
N(z) = D_\infty^{\sigma_3}
\begin{pmatrix}
\dfrac{a(z)+a(z)^{-1}}{2} && \dfrac{a(z)- a(z)^{-1}}{2\I}
\\
\dfrac{a(z)- a(z)^{-1}}{-2\I} && \dfrac{a(z)+a(z)^{-1}}{2}
\end{pmatrix} D(z)^{-\sigma_3}
\eeq
where $D(z)$ is the Szeg\H{o} function associated to $\rho$ \cite{GS}:
\beq
D(z) := \exp \Bigg( \dfrac{\sqrt{z^2-1}}{2 \pi}\int_{-1}^1 \dfrac{\log \rho(x)}{\sqrt{1-x^2}} \dfrac{dx}{z-x} \Bigg), \hspace{7pt} z \in \C \setminus [-1,1], 
\eeq
\beq
D_\infty := \lim\limits_{z \rightarrow \infty} D(z) = \exp \Bigg( \dfrac{1}{2 \pi}\int_{-1}^1 \dfrac{\log \rho(x)}{\sqrt{1-x^2}} \hspace{3pt} dx \Bigg)
\eeq
and
\beq
a(z) := \Big( \dfrac{z-1}{z+1} \Big)^{1/4} 
\eeq
with a branch cut on $[-1,1]$ and $a(\infty) = 1$. We note that
\beq
\Sigma^{\exp} := \Sigma_1 \cup \Sigma_3, \quad \quad \Sigma^{mod}:= \Sigma_2 = (-1,1).
\eeq
\begin{remark}
It is important to note that $v_\cN$ might not be uniformly bounded and $N$ might not satisfy condition \eqref{Nkappa}. This is because of the singular behaviour of the weight function $\rho$, and hence $D$, near the endpoints \eqref{condw}. However, we will still use H\"older's inequality to get similar results as in Theorem \ref{mainTheorem}. 
\end{remark}
Next we want to arrive at estimates for $\Vert S_- w_\cS N \Vert_{L^1(\Sigma^{exp}, dz)}$. As $N$ has no jumps on $\Sigma^{exp}$, the $\pm$ subscripts can be left out (in the following inequality we abbreviate $\Vert \ . \ \Vert_{L^p(\Sigma^{exp}, dz)}$ by $\Vert \ . \ \Vert_p$ and so on):
\beq \label{holder}
\begin{gathered}
\Vert S_- w_\cS N^{-1} \Vert_{L^1(\Sigma^{exp}, dz)} = \Bigg\Vert X \varphi^{-n\sigma_3} \begin{pmatrix}
0 & 0 
\\
\rho^{-1}\varphi^{-2n} & 0
\end{pmatrix}  N^{-1} \Bigg\Vert_{L^1(\Sigma^{exp}, dz)}
\\
\lesssim \ \Vert X \varphi^{-n\sigma_3} \Vert_{L^p(\Sigma^{exp}, dz)} \ \Vert \rho^{-1} \Vert_{L^\vartheta(\Sigma^{exp}, dz)} \ 
\\
\times \
\Vert  \varphi^{-2n} (z\pm 1)^{-1/4}
\Vert_{L^\tau(\Sigma^{exp}, dz)}\ 
\Vert D^{\sigma_3} \Vert_{L^\omega(\Sigma^{exp}, dz)},
\end{gathered}
\eeq
with 
\beq \label{pqrs}
\dfrac{1}{p} + \dfrac{1}{\vartheta} + \dfrac{1}{\tau} + \dfrac{1}{\omega} = 1.
\eeq
We have used for simplicity $\varphi^{-2n} (z\pm 1)^{-1/4}$, which has the same growth behaviour as the more complicated expression
\beq
\Bigg\Vert \begin{pmatrix}
0 & 0
\\
\varphi^{-2n} & 0
\end{pmatrix} 
\begin{pmatrix}
\dfrac{a(z)+a(z)^{-1}}{2} && \dfrac{a(z)- a(z)^{-1}}{-2\I}
\\
\dfrac{a(z)- a(z)^{-1}}{2\I} && \dfrac{a(z)+a(z)^{-1}}{2}
\end{pmatrix}D_\infty^{-\sigma_3}
\Bigg\Vert_{L^\tau(\Sigma^{exp},dz)}.
\eeq
Using \eqref{vkappa} with $\beta = 1/2$ and $q = \tau$, we see that
\beq
\Vert \varphi^{-2n} (z\pm 1)^{-1/4} \Vert_{L^\tau(\Sigma^{exp},dz)} = O(n^{-2/\tau + 1/2}).
\eeq
Hence, we should try to choose $\tau$ as small as possible, which translates into maximizing $p$, $\vartheta$ and $\omega$ under the constraint that all the corresponding terms in \eqref{holder} remain bounded. 

Next we want to show that $\Vert X \varphi^{-n\sigma_3} \Vert_{L^p(\Sigma^{exp},dz)}$ remains bounded for an appropriate $p \geq 1$. Using Lemma \ref{extension} together with \eqref{limitS}, it is enough to establish the $L^p((-1,1), dx)$-boundedness of $X \varphi^{-n\sigma_3}_\pm$, which, as $|\varphi_\pm(x)| = 1$ for $x \in (-1,1)$, is equivalent to showing that
\beq
\Vert X_\pm \Vert_{L^p((-1,1), dx)} \leq C_1 < \infty.
\eeq
 Looking at the components of $X$ in \eqref{X} and using the $L^p$-boundedness of the Cauchy boundary operators $\mathcal C_\pm^{(-1,1)}$ on $L^p((-1,1),  dx)$ for $p \in (1,\infty)$, we need to show that
\beq
\Vert p_n \Vert_{L^p((-1,1), dx)} \leq C_2, \quad  \Vert p_n \rho \Vert_{L^p((-1,1), dx)} \leq C_3, \quad C_2, C_3 < \infty.
\eeq
For both cases we can use H\"older's inequality and theorem \ref{szego} which tells us that
\beq
\Vert p_n \sqrt{\rho} \Vert_{L^a((-1,1), dx)} \leq C_4 < \infty.
\eeq
for $a = 2$ and thus automatically for $a \leq 2$, as we integrate over a finite interval.
Using this we can write
\begin{align}
\Vert p_n \Vert_{L^p((-1,1), dx)} &\leq \Vert p_n \sqrt{\rho} \Vert_{L^a((-1,1), dx)} \Vert  \sqrt{\rho}^{-1} \Vert_{L^b((-1,1), dx)}
\\
\Vert p_n \rho \Vert_{L^p((-1,1), dx)} &\leq \Vert p_n \sqrt{\rho} \Vert_{L^{a'}((-1,1), dx)} \Vert  \sqrt{\rho} \Vert_{L^{b'}((-1,1), dx)}
\end{align}
with
\beq 
\dfrac{1}{p} = \dfrac{1}{a} + \dfrac{1}{b} = \dfrac{1}{a'} + \dfrac{1}{b'}.
\eeq
 As we want to maximize $p$, we choose $a = a' = 2$. The condition \eqref{condw} tells us that we can take $b \leq 2\nu_-$ and $b' \leq 2\nu_+$. Again, maximizing $p$, we choose
\beq
b = b' = 2\nu_0 := 2\min\lbrace \nu_+, \nu_- \rbrace 
\eeq
which gives us
\beq
p := \dfrac{2\nu_0}{1+\nu_0}.
\eeq
The condition $p > 1$ translates to $\nu_0 > 1$, which was assumed right after \eqref{condw}.

Next, let us consider the term $\Vert \rho^{-1} \Vert_{L^\vartheta(\Sigma^{exp},dz)}$. This is the simplest case, as \eqref{condw} implies that we can choose $\vartheta := \nu_-$.

The term $\Vert D^{\sigma_3} \Vert_{L^\omega(\Sigma^{exp},dz)}$ is more challenging. Recall that the Szeg\H{o} function satisfies \cite[Eq.~2.14--15]{DC}
\beq
D_+(x) D_-(x) = \rho(x), \quad x \in (-1,1)
\eeq
and
\beq
\overline{D(z)} = D(\overline{z}), \quad \C \setminus [-1,1]. 
\eeq
These two identities imply
\beq
|D_\pm(x)| = \sqrt{\rho(x)} \quad\mbox{and} \quad D_+(x) - D_-(x) = 2 \imaginary(D_+(x)), \quad x \in (-1,1).
\eeq
Hence, $D$ satisfies an additive R-H problem, with 
\beq
\Vert D_+ - D_- \Vert_{L^\omega((-1,1), dx)} \leq \Vert \sqrt{\rho} \Vert_{L^\omega((-1,1), dx)}. 
\eeq
As $D$ has a limit at infinity, namely $D_\infty$, we can apply lemma \ref{extension} to conclude that
\beq
\Vert D \Vert_{L^\omega(\Sigma^{exp},dz)} = O(\Vert \sqrt{\rho} \Vert_{L^\omega((-1,1),dx)}).
\eeq
From \eqref{condw} it follows that we must have $\omega \leq 2\nu_+$. The same argument works with $D^{-1}$, and in the end we can choose $\omega := 2\nu_0$.

We have computed the optimal values for $p$, $\vartheta$ and $\omega$:
\begin{align}
p = \dfrac{2\nu_0}{1+\nu_0}, 
\hspace{40pt}
\vartheta = \nu_-, 
\hspace{40pt}
\omega = 2\nu_0.
\end{align}
A quick calculation using the relation \eqref{pqrs} shows that
\beq
\tau = \dfrac{2\nu_0 \nu_-}{\nu_0 \nu_--2(\nu_0+\nu_-)},
\eeq
which implies
\beq
\Vert  (z\pm 1)^{-1/4} \varphi^{-2n}\Vert_{L^\tau(\Sigma^{exp},dz)} = O(n^{-\lambda}),
\eeq
with
\beq \label{lambda}
\lambda := \dfrac{1}{2}- \dfrac{2(\nu_0+\nu_-)}{\nu_0\nu_-}.
\eeq
As all the other terms on the right-hand side of \eqref{holder} remain bounded as $n \rightarrow \infty$, we conclude: 
\beq
\Vert S_-w_\cS N^{-1} \Vert_{L^1(\Sigma^{exp},dz)} = O(n^{-\lambda}). 
\eeq
To ensure that $\lambda$ is positive it is enough to assume that 
\beq \label{nu1}
\nu_0 > 8,
\eeq
and in the case $\nu_0 = \nu_+ \in (4,8)$, one would need
\beq \label{nu2}
\nu_- > \dfrac{4\nu_+}{\nu_+-4}.
\eeq
For positive $\lambda$ we can conclude that 
\beq \label{SNasym}
S(z, n) = (\id + O(n^{-\lambda} \dist(z, \Sigma^{exp})^{-1}))N(z).
\eeq
In particular
\begin{align} \label{asymresult}
X(z,n) &= (\id + O(n^{-\lambda} \dist(z, \Sigma^{exp})^{-1}))N(z)\varphi(z)^{n\sigma_3}, &z \in \Omega_1,
\\\label{asymresult2}
X(z,n) &= (\id + O(n^{-\lambda} \dist(z, \Sigma^{exp})^{-1}))N(z)
\\\nonumber
& \hspace{69pt} \times \begin{pmatrix}
1 & 0
\\
\rho^{-1}(z)\varphi^{-2n}(z) & 1
\end{pmatrix}
\varphi(z)^{n\sigma_3}, &z \in \Omega_2,
\\\label{asymresult3}
X(z,n) &= (\id + O(n^{-\lambda} \dist(z, \Sigma^{exp})^{-1})N(z)
\\\nonumber
& \hspace{69pt} \times
\begin{pmatrix}
1 & 0
\\
-\rho^{-1}(z)\varphi^{-2n}(z) & 1
\end{pmatrix}
\varphi(z)^{n\sigma_3},  &z \in \Omega_3.
\end{align}
We can state our main result concerning orthogonal polynomials:
\begin{theorem} \label{theorem}
Let the weight function $\rho$ have an analytic continuation to a lense-shaped neighbourhood $\cL$ of $(-1,1)$  and satisfy condition \eqref{condw}, with $\nu_\pm$ fulfilling either \eqref{nu1} or \eqref{nu2}. Moreover, let $U$ be a compact subset of the Riemann sphere contained in $\mathbb{C} \setminus [-1,1] \cup\lbrace \infty \rbrace$ and $V$ be a compact set contained in $\cL$. Then, we have for the $n$-th orthogonal polynomial with leading coefficient $2^n$:
\begin{align} \label{asymU}
p_n(z) = \dfrac{D_\infty \varphi(z)^{n+1/2}}{\sqrt{2}D(z)(z^2-1)^{1/4}} +O(n^{-\lambda} z^{-1}\varphi(z)^n), \quad z \in U,
\end{align}
and
\begin{align} \label{asymV}
p_n(z) = \dfrac{1}{\sqrt{2}(z^2-1)^{1/4}}\Big(\dfrac{D_\infty}{D(z)}\varphi(z)^{n+1/2} \pm \I \dfrac{D_\infty D(z)}{\rho}\varphi(z)^{-n-1/2}\Big)
\notag\\
+O(n^{-\lambda}\varphi(z)^n), \quad z \in V,
\end{align}
with the $+(-)$ sign in the case of $z$ in the upper(lower)-half plan, and both signs giving the same result for $z \in(-1,1)$. The constant $\lambda$ is given by \eqref{lambda}.
\end{theorem}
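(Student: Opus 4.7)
The plan is to read off $p_n(z) = X_{11}(z,n)$ from the asymptotic identities \eqref{asymresult}--\eqref{asymresult3} already established in this section, and then to simplify the $(1,1)$-entry of the explicit model solution $N(z)$ using the relation between $a(z) = \bigl((z-1)/(z+1)\bigr)^{1/4}$ and the Joukowski map $\varphi(z) = z+\sqrt{z^2-1}$. All the heavy analytic lifting -- the a priori $L^p$-estimate, H\"older's inequality on $\Sigma^{\exp}$, and the residual integral bound -- has already been carried out; what remains is essentially a bookkeeping exercise together with a minor refinement at infinity.

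First I would compute $N_{11}$ and $N_{12}$ from the explicit formula for $N$. Using $(\sqrt{z-1}+\sqrt{z+1})^2 = 2\varphi(z)$ together with $(\sqrt{z-1}-\sqrt{z+1})(\sqrt{z-1}+\sqrt{z+1}) = -2$, one obtains the algebraic identities
\[
\tfrac{a(z)+a(z)^{-1}}{2} = \tfrac{\varphi(z)^{1/2}}{\sqrt{2}\,(z^2-1)^{1/4}}, \qquad \tfrac{a(z)-a(z)^{-1}}{2\I} = \tfrac{\I\,\varphi(z)^{-1/2}}{\sqrt{2}\,(z^2-1)^{1/4}}.
\]
Plugging the first identity into $N_{11}(z)\varphi(z)^n$ immediately recovers the leading term of \eqref{asymU}. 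In the lens halves $\Omega_2, \Omega_3$, the triangular factor in \eqref{asymresult2}--\eqref{asymresult3} contributes the additional term $N_{12}(z)\rho(z)^{-1}\varphi(z)^{-n}$ to the $(1,1)$-entry, which (via the second identity) produces the second summand of \eqref{asymV}, with the $\pm$ sign determined by whether $z$ lies in the upper or lower half of $\cL$.

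For the error analysis I would treat $U$ and $V$ separately. On $V$, a compact subset of $\cL$ (implicitly bounded away from $\pm 1$), both $\dist(z,\Sigma^{\exp})$ and $|z^2-1|$ are uniformly bounded below, so \eqref{SNasym} directly yields an error of $O(n^{-\lambda}\varphi(z)^n)$ after multiplication by $N(z)\varphi(z)^{n\sigma_3}$ and the bounded triangular factor. On $U$, a compact subset of $\hat\C\setminus[-1,1]$ which may include a neighbourhood of $\infty$, the bound $\dist(z,\Sigma^{\exp})^{-1}$ alone is not sharp enough to produce the $z^{-1}$ weight required by \eqref{asymU}; instead I would invoke the Laurent refinement \eqref{Sasymptotics2} with $\ell = 1$, giving $S(z,n) = N(z) + O(n^{-\lambda}z^{-1})N(z)$ uniformly as $z \to \infty$. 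Combined with $\varphi(z) = 2z(1+O(z^{-2}))$ at infinity, this upgrades the error to the weighted bound $O(n^{-\lambda}z^{-1}\varphi(z)^n)$ claimed in \eqref{asymU}.

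The main obstacle is the careful tracking of branches in the identities above to guarantee global agreement on $\C\setminus[-1,1]$ and to derive the correct signs from the boundary values $a_\pm(x)$ on $(-1,1)$. A small auxiliary check is that the two formulas in \eqref{asymV} coincide on $(-1,1)$; this follows from the jump relations $\varphi_+(x)\varphi_-(x) = 1$ and $D_+(x)D_-(x) = \rho(x)$, which swap the two summands up to the $\pm\I$ factor coming from $a_+(x) \neq a_-(x)$. Modulo these identifications, the theorem reduces to a routine verification.
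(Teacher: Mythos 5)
Your proposal follows the paper's proof almost step for step: the identities \eqref{a1}--\eqref{a2}, reading off $p_n = X_{11}$ from \eqref{asymresult}--\eqref{asymresult3} (with the triangular factor contributing the second summand inside the lens), compactness of $U$ and $V$ to bound $D^{\pm 1}$ and $a^{\pm 1}$ and to control $\dist(z,\Sigma^{exp})^{-1}$, and the jump relations $\varphi_+\varphi_- = 1$, $D_+D_- = \rho$ together with the jump of the quarter root (the paper uses $(x^2-1)^{1/4}_+ = \I\,(x^2-1)^{1/4}_-$) to check that both signs in \eqref{asymV} agree on $(-1,1)$. The single point of divergence is your treatment of the $z^{-1}$ weight on $U$, and there your justification is off: contrary to your claim, the bound \eqref{SNasym} \emph{is} already sharp enough, because $\Sigma^{exp} \subset \cL$ is a bounded contour and $0 \in [-1,1]$ forces $0 \notin U$, so $\dist(z,\Sigma^{exp})^{-1}$ and $|z|^{-1}$ are uniformly comparable on the spherically compact set $U$ --- this is precisely the observation the paper makes (``asymptotic terms of order $O(z^{-1})$ and $O(\dist(z,\Sigma^{exp})^{-1})$ become equivalent''). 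Your substitute, invoking the Laurent refinement \eqref{Sasymptotics2} with $\ell = 1$, does yield the same conclusion, but you cannot cite Theorem \ref{mainTheorem} verbatim here: as the remark following the definition of $N$ emphasizes, for the singular weights of this section $N$ may violate \eqref{Nkappa}, so the hypotheses of that theorem fail, and one would have to re-derive the expansion directly from \eqref{R} using the four-factor H\"older estimate \eqref{holder} of this section (this does go through, since $k^i$ is bounded on the bounded contour $\Sigma^{exp}$, but it is heavier machinery than needed). With that one step either repaired in this way or replaced by the distance-comparability observation, your argument coincides with the paper's.
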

\begin{proof}
First observe the two identities
\beq \label{a1}
\dfrac{a(z)+a(z)^{-1}}{2} = \dfrac{\varphi(z)^{1/2}}{\sqrt{2}(z^2-1)^{1/4}}
\eeq
and
\beq \label{a2}
\dfrac{a(z)-a(z)^{-1}}{2\I} = \I \dfrac{\varphi(z)^{-1/2}}{\sqrt{2}(z^2-1)^{1/4}}.
\eeq

Next, let us consider the asymptotics in $U$. As $U$ is assumed to be compact on the Riemann sphere, it must be a finite distance away from $[-1,1]$. In particular, we can choose the contour $\Sigma^{\exp} \subset \cL$ such that
\beq
\dist(U, \Sigma^{exp}) > 0.
\eeq
It follows that for $z \in U$, we have that asymptotic terms of order $O(z^{-1})$ and $O(\dist(z,\Sigma^{exp})^{-1})$ become equivalent. Again by compactness of $U$, we see that the functions $D^{\pm 1}$ and $a^{\pm1}$ are bounded in $U$. With this information, \eqref{asymU} is obtained by multiplying out \eqref{asymresult} and using $p_n(z) = X_{11}(z,n)$ from Theorem \ref{orthogonalMatrix}.

For the set $V$ we can again choose $\Sigma^{exp} \subset \cL$ such that
\beq
\dist(V,  \Sigma^{exp}) > 0.
\eeq
As $V$ is bounded, asymptotic terms of order $O(\dist(z, \Sigma^{exp})^{-1})$ and $O(1)$ become equivalent. Similar to before the functions $D^{\pm1}$ and $a^{\pm1}$ are uniformly bounded, as they are continuous in $V \setminus (-1,1)$ and take continuous limits on $V \cap (-1,1)$. Multiplying out \eqref{asymresult2} and \eqref{asymresult3} gives then \eqref{asymV}. Moreover, using for $x \in (-1,1)$,
\begin{align}
    (x^2-1)^{1/4}_+ &= \I (x^2-1)^{1/4}_-
    \notag\\
    D_+(x)D_-(x) &= \rho(x)
    \\\nonumber
    \varphi_+(x)\varphi_-(x) &= 1
\end{align}
one can verify that both signs in \eqref{asymV} give the same result on $V \cap (-1,1)$.
\end{proof}
\begin{remark}
The leading terms in \eqref{asymU} and \eqref{asymV} have been obtained by Bernstein and Szeg\H{o} in \emph{\cite[Thm.~12.1.1--4]{GS}}. However, the R-H method allows for more explicit bounds on the error terms. 
\end{remark}
\subsection{R-H problem with constant jump matrices} \label{constantMatrices}
Finally, let us explain why the choice of weight function makes a reformulation of the local parametrix problem to a R-H problem with constant jump matrices, as found in \cite[Sect.~6]{KMVV}, impossible. In that paper the authors considered the following local parametrix problem for $P$ in a small but fixed disc $U_\delta$ of radius $\delta$ around $z = 1$ (and later analogously around $z = -1$):
\\
\\
Find a $2 \times 2$ matrix-valued function $P$, such that
\vspace{10pt}
\begin{enumerate}[(i)]
    \item $P(z,n)$ is analytic for $z \in U_\delta \setminus \Sigma$,
    \\
    \item $P_+(k,n) = P_-(k,n) v_\cS(k,n)$, \ for  $ k \in \Sigma \cap U_\delta$,
    \\
    \item $P(k,n)N^{-1}(k) = \id + o(1)$, \ uniformly for $k \in  \partial U_\delta$ as $n\rightarrow \infty$.\footnote{In \cite{KMVV} an error term of order $O(n^{-1})$ is used instead of $o(1)$.} 
\end{enumerate}
\vspace{10pt}
Hence, $P$ should satisfy locally around $z = 1$ the same R-H problem as $S$, but the normalization at infinity is changed to a matching condition with $N$. To transform this R-H problem to an explicitly solvable one, a further conjugation step is needed to make the jump matrices independent of $k \in \Sigma$. This is a crucial step that cannot be performed in our case. Assume for a moment that $\rho$ has an analytic nowhere vanishing continuation in $U_\delta$. In \cite[Eq.~6.7]{KMVV} the authors define the function $W$ by\footnote{In \cite{KMVV} a Jacobi-weight prefactor $(1-x)^\alpha(1+x)^\beta$ was considered, which is not treated in our paper, hence we study the case $\alpha = \beta = 0$.}
\beq \label{W}
W(z) := \sqrt{\rho(z)}, \quad z \in U_\delta.
\eeq 
This function can be used to construct $P^{(1)}$:
\beq
P^{(1)}(z,n) := P(z,n)\varphi^{n\sigma_3}(z) W(z)^{\sigma_3}, \quad U_\delta.
\eeq
 Then $P^{(1)}$ satisfies the following R-H problem:
\\
\\
Find a $2 \times 2$ matrix-valued function $P^{(1)}$, such that
\vspace{10pt}
\begin{enumerate}[(i)]
    \item $P^{(1)}(z,n)$ is analytic in $z\in U_\delta \setminus \Sigma$,
    \\
    \item $P_+^{(1)}(k,n) = P_-^{(1)}(k,n) v_\cP(k,n)$, \ for  $ k \in \Sigma \cap U_\delta$,
    \\
    \item $P^{(1)}(k,n)\big(N(k)\varphi(k)^{n\sigma_3}W(k)^{\sigma_3}\big)^{-1} = \id + o(1)$, 
    \vspace{5pt}
    \\
     uniformly for $k \in  \partial U_\delta$ as $n\rightarrow \infty$.
\end{enumerate}
\vspace{10pt}
with
\beq
v_\cP(k) =
\begin{cases}
\begin{pmatrix}
1 & 0
\\
1 & 1
\end{pmatrix}, \hspace{11pt} \mbox{for} \ k \in \Sigma^{exp} \cap U_\delta,
\\
\begin{pmatrix}
0 & 1
\\
-1 & 0
\end{pmatrix}, \ \mbox{for} \ k \in \Sigma^{mod} \cap U_\delta.
\end{cases}
\eeq
After an $n$-dependent change of variables $z \rightarrow \zeta$, the matching condition (iii) for $P^{(1)}$ can be transformed into a normalization at infinity for the matrix-valued function 
\beq
\Psi(\zeta) := P^{(1)}(z(\zeta)). 
\eeq
The corresponding jump contour $\Sigma^\cB := \Sigma_1^\cB \cup \Sigma^\cB_2 \cup \Sigma^\cB_3$ consists of three rays emanating from the origin as in Figure \ref{besselcontour}:
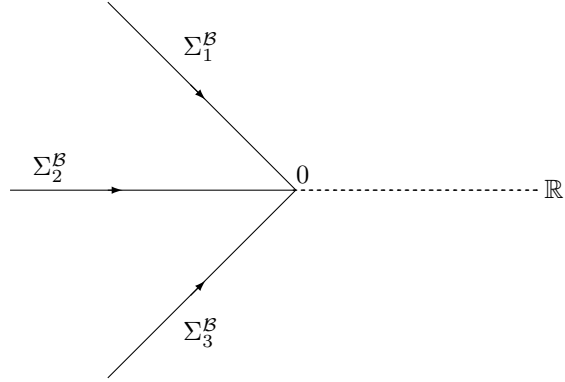
\begin{figure}[H] 
\begin{picture}(7,5.2)
\put(-0.3,2.5){\line(1,0){3.8}}
\put(1,2.5){\vector(1,0){0.2}}

\put(1,0){\line(1,1){2.5}}
\put(2.1,1.1){\vector(1,1){0.2}}

\put(1,5){\line(1,-1){2.5}}
\put(2.1,3.9){\vector(1,-1){0.2}}

\put(2,4.3){$\Sigma_1^\cB$}
\put(0,2.7){$\Sigma_2^\cB$}
\put(2,0.5){$\Sigma_3^\cB$}

\put(6.8,2.4){$\R$}
\put(3.5,2.6){$0$}

\curvedashes{0.05,0.05}
\curve(3.4,2.5, 6.7,2.5)
\end{picture}
\caption{\label{besselcontour} Contour for the Bessel R-H problem}
\end{figure}
and the jump matrix $v_\cB$ has the form
\beq
v_\cB(\zeta) =
\begin{cases}
\begin{pmatrix}
1 & 0
\\
1 & 1
\end{pmatrix}, \hspace{11pt} \mbox{for} \ \zeta \in \Sigma_1^\cB \cup \Sigma_3^\cB,
\\
\begin{pmatrix}
0 & 1
\\
-1 & 0
\end{pmatrix}, \ \mbox{for} \ \zeta \in \Sigma_2^\cB.
\end{cases}
\eeq
The R-H problem for $\Psi$ is stated as follows:
\\
\\
Find a $2 \times 2$ matrix-valued function $\Psi$, such that
\vspace{10pt}
\begin{enumerate}[(i)]
    \item $\Psi(\zeta)$ is analytic for $\zeta \in \C \setminus \Sigma^\cB$,
    \\
    \item $\Psi_+(\zeta) = \Psi_-(\zeta) v_\cB(\zeta)$, \ for  $ \zeta \in \Sigma^\cB$,
    \\
    \item $\Psi(\zeta) \rightarrow (2\pi \zeta^{1/2})^{-\sigma_3/2} \dfrac{1}{\sqrt{2}} 
    \bigg(\begin{pmatrix}
    1 & i
    \\
    i & 1
    \end{pmatrix}+o(1)\bigg)\E^{2\zeta^{1/2}\sigma_3}$
    ,  \quad  uniformly as $\zeta \rightarrow \infty$.
\end{enumerate}
\vspace{10pt}
This R-H problem can be regarded as a limit as $n\rightarrow \infty$ of the local parametrix problem. We will refer to such R-H problems in this context as \emph{R-H problems with constant jump matrices}. The preceding example is solved explicitly using the Bessel functions \cite[Eq.~6.23--25]{KMVV}. In the Appendix we provide the heuristics by which the solutions to R-H problems with constant jump matrices can be found, using the example of the Airy R-H problem. 

As $\rho$ is not assumed to have an analytic continuation to a disc $U_\delta$ around $z = 1$ ($z = -1$), the function $W$ in \eqref{W} cannot be defined in general. Hence, the reformulation of the parametrix problem as a R-H problem with constant jump matrices, at least with the usual approach, is not possible. However, we point out that under the assumptions of Theorem \ref{theorem} the local parametrix problem for $P$ indeed has a solution.
\begin{theorem}
The exact solution $S$, satisfies the local parametrix R-H problem for $P$.
\end{theorem}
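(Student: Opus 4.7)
The proof is essentially a verification that the three defining conditions (i)--(iii) of the local parametrix problem for $P$ are satisfied by $S$, and almost all of the work has already been done via the asymptotic estimate \eqref{SNasym}. The plan is to treat (i)--(ii) as tautologies and reduce (iii) to that estimate.

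Conditions (i) and (ii) follow immediately by restriction. Since $S$ is analytic on $\C\setminus\Sigma$, it is in particular analytic on the subset $U_\delta\setminus\Sigma$, which gives (i). Likewise, $S$ satisfies $S_+=S_-v_\cS$ on the entire contour $\Sigma$, hence on the subcontour $\Sigma\cap U_\delta$, verifying (ii). Nothing needs to be computed here beyond quoting the global R-H problem.

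The substance is the matching condition (iii). Rearranging \eqref{SNasym} gives
\[
S(k,n)N^{-1}(k)-\id = O\bigl(n^{-\lambda}\dist(k,\Sigma^{exp})^{-1}\bigr),
\]
so whenever $k$ ranges over a set on which $\dist(\,\cdot\,,\Sigma^{exp})$ is uniformly bounded away from $0$, the right-hand side is $O(n^{-\lambda})=o(1)$. The plan is to exploit the freedom in the choice of the lens $\cL$ and of the arcs $\Sigma_1,\Sigma_3$: we may arrange that $\Sigma^{exp}$ enters $U_\delta$ transversally through exactly two points of $\partial U_\delta$, so that $\dist(\,\cdot\,,\Sigma^{exp})$ is uniformly positive on all of $\partial U_\delta$ outside arbitrarily small neighborhoods of these two crossings.

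The only mild obstruction is precisely at those two crossings, where the bound degenerates because $S$ itself has a jump discontinuity on $\partial U_\delta\cap\Sigma^{exp}$. This is an unavoidable feature of any local parametrix problem, and is standardly resolved by interpreting the matching condition (iii) off the finite set $\partial U_\delta\cap\Sigma^{exp}$ (equivalently, as a one-sided uniform limit from $\Omega_1$). With this convention the displayed estimate delivers (iii) directly, and the theorem reduces to a clean corollary of \eqref{SNasym} with no further analysis required.
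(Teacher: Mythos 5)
Your treatment of (i) and (ii) and the reduction of (iii) to \eqref{SNasym} is exactly what the paper does. The gap is in how you handle the two points of $\partial U_\delta\cap\Sigma^{exp}$. The matching condition (iii) is stated \emph{uniformly} on $\partial U_\delta$, and your resolution --- restricting to the complement of arbitrarily small neighbourhoods of the crossings, or taking a one-sided limit from $\Omega_1$ --- does not recover that: the bound $O(n^{-\lambda}\dist(k,\Sigma^{exp})^{-1})$ degenerates as $k$ approaches a crossing point along $\partial U_\delta$ from \emph{either} side, so on $\partial U_\delta$ minus the two points you still have no uniform estimate, only a uniform estimate on $\partial U_\delta$ minus two arcs of fixed positive length. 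That is a strictly weaker statement than condition (iii), and declaring it to be the ``standard'' interpretation does not close the gap; in the standard Bessel/Airy constructions the matching is genuinely uniform on all of $\partial U_\delta$ (the ratio $PN^{-1}$ has only an exponentially small jump at the crossings), so the condition as written is the one to prove.

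The fix is the deformation freedom you already invoke, but used for a different purpose. Since $\rho$ is analytic in the lens $\cL$, the arcs $\Sigma_1,\Sigma_3$ are movable, and the points where $\Sigma^{exp}$ meets $\partial U_\delta$ move with them; changing the arcs changes $S$ only by the explicit triangular factor, which is $\id+O(\E^{-cn})$ at distance $\delta$ from $\pm1$. Given any $k\in\partial U_\delta$, choose a deformed contour whose crossing points stay a fixed distance from $k$; the estimate \eqref{SNasym} for the deformed problem then gives $S(k,n)N(k)^{-1}=\id+O(n^{-\lambda})$ at that $k$, and a finite cover of $\partial U_\delta$ by such choices yields the uniform bound $O(n^{-\lambda})$ on all of $\partial U_\delta$. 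This is precisely the paper's argument (``the points where $\Sigma^{exp}$ and $U_\delta$ meet are movable''), and it is the one step your proposal is missing.
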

\begin{proof}
The matrix-valued function $S$ satisfies trivially condition (i) and (ii) for $P$. The only remaining condition (iii) is the matching condition:
\beq
S(k,n)N(k,n)^{-1} = R(k,n) = \id + o(1), \quad \mbox{for } k \in \partial U_\delta.
\eeq
uniformly as $n \rightarrow \infty$. We can substitute $O(n^{-\lambda}\dist(k,\Sigma^{exp})$ for the error term $o(1)$, because of \eqref{SNasym}. However, by deforming the contour $\Sigma^{exp}$ we see that the points where $\Sigma^{exp}$ and $U_\delta$ meet are movable, and that the error term is in fact uniform on $\partial U_\delta$, meaning
\beq
S(k,n)N(k,n)^{-1} = R(k,n) = \id + O(n^{-\lambda}), \quad \mbox{for } k \in \partial U_\delta,
\eeq
uniformly as $n \rightarrow \infty$. Hence, $S$ is a solution to the local parametrix R-H problem.
\end{proof}
The solution $S$ is unique in the following sense: For any other solution $\widetilde P$, the matrix-valued function $H:= \widetilde P S^{-1}$ will be analytic in $U_\delta$, and satisfy
\beq
H(k,n) = \widetilde P(k,n)S(k,n)^{-1} = \id+ o(1) \quad \mbox{for } k \in \Sigma \cap U_\delta,
\eeq
uniformly as $n \rightarrow \infty$. By the maximum principle for analytic function, we therefore know that
\beq
H(z,n) = \widetilde P(z,n) S(z,n)^{-1} = \id+  o(1) \quad \mbox{for } z \in  U_\delta,
\eeq
uniformly as $n \rightarrow \infty$. Hence we see that $\widetilde P$ has the form
\beq
\widetilde P(z,n) = H(z,n)S(z,n), \quad z \in U_\delta,
\eeq
where $H(z,n)$ is a sequence of matrix-valued function with analytic entries, such that it converges uniformly to the identity matrix as $n \rightarrow \infty$. Moreover, every solution of the local parametrix problem can be obtained in this way. 
\section{Discussion}
We have shown that the explicit construction of a local parametrix solution is not necessary for a rigorous R-H analysis, provided an a priori $L^p$-estimate of the exact solution $S$ is known. Our method has been illustrated on the example of orthogonal polynomials on $[-1,1]$ with a new class of weight functions $\rho$. We impose analytic continuation of $\rho$ to a lense-shaped neighbourhood of $(-1,1)$ and a growth condition at the endpoints (cf.~Figure \ref{L} and Eq.~\eqref{condw}). In particular, we do not require any sort of analytic continuation around $x = \pm 1$. This feature differentiates our class of admissible weight functions, from those that lead to solvable parametrix problems.
In fact, the assumptions on $\rho$ made the reformulation of the local parametrix problem to an explicitly  solvable R-H problem with piecewise constant jump matrices, as done in \cite[Sect.~6]{KMVV}, impossible. 

However, the error bound obtained that way is in general worse than the actual error term. In \cite{KMVV} the authors show
\beq \label{expansion}
S(z,n) =\Big(\id + \sum_{k = 1}^\ell \dfrac{R_k(z)}{n^k} + O(n^{-\ell-1})\Big) N(z), \quad \mbox{as } n \rightarrow \infty, \ \ell \in \N_+, 
\eeq
uniformly away from $x = \pm 1$. They considered modified Jacobi weight functions of the form \eqref{Jacobi}. The series expansion stems from the series expansion of the Bessel functions, which are contained in the solution of the parametrix problem.
Our approach would only result in $O(n^{-1/2})$ without a series expansion, for weight functions $\rho$ satisfying that $|\rho^{\pm 1}|$ is uniformly bounded. This case corresponds to $\nu_\pm \rightarrow \infty$ in \eqref{condw}. This is unsurprising as the a priori $L^p$-estimate contains very limited information on the local structure of the exact solution $S$ around the oscillatory points. 

Whether an expansion of the form \eqref{expansion} holds in our case, seems to be an open problem. Also the optimal error bound in \eqref{asymU} and \eqref{asymV} is, to the best of our knowledge, unknown.
These questions relate to one of the main motivations for obtaining large $n$ asymptotics of orthogonal polynomials, namely the study of eigenvalue statistics for ensembles of random matrices. This field of study has been initiated by Wigner \cite{Wigner55}. The statistics in the bulk of the spectrum have been further studied for special cases by Dyson in \cite{Dyson1962}, \cite{Dyson1970} and Mehta in \cite{Mehta71} and confirmed instances of the Wigner--Dyson--Mehta universality conjecture. This universality conjecture states that the local statistics in the bulk of the spectrum depend only on the type of the ensemble, which is either unitary, orthogonal or symplectic. In the unitary case, this conjecture has been proven for the classical Hermite, Laguerre and Jacobi ensembles (\cite{Dyson1970}, \cite{NW1}, \cite{SNW}). Similar results hold for Wigner matrices, but require different methods (\cite{EPRSY}, \cite{ERSTY}, \cite{EY17}, \cite{TV}). 

Our results show universality in the bulk, but also have interesting connections with universality near the edge. To understand why, we have to look at known results. Here one has to distinguish between the soft edge (\cite{BI}, \cite{PD}, \cite{DKMVZ2}, \cite[Sect.~3]{Forrester}) and the hard edge (\cite[Sect.~2]{Forrester}, \cite{KMVV}, \cite{KV}, \cite{NW93}). While the former leads to local statistics described by the Airy kernel, the latter leads to the Bessel kernel. Both kernels have been further studied by Tracy and Widom (\cite{TWAiry}, \cite{TWBessel}). Pollaczek--type weight functions are in turn related to solutions to the Painlev\'e III equation \cite{CI}. The corresponding Deift--Zhou nonlinear steepest descent analysis has been performed in \cite{BMM}, \cite{MM}, \cite{XDZ}, (see \cite{CCF} for the Jacobi case). The Airy and Bessel kernels are both special limiting cases of the Painlev\'e III kernel. Interestingly, showing this leads to auxiliary local parametrix problems within the standard local parametrix problems \cite[Sect.~5, 6]{XDZ}. The case of edge universality for Wigner matrices has been studied in \cite{BEYBeta}, \cite{SO99}, \cite{TVEdge}.

Using the R-H method, the cited results on edge universality (except the ones concerning Wigner matrices) can be obtained through  the explicit solution of a R-H problem with constant jump matrices. This R-H problem is a limit of local parametrix problems as the polynomial degree $n$ goes to infinity. However, as shown in Section \ref{constantMatrices}, for weight functions that do not have an analytic continuation outside the lense $\cL$ (cf.~Figure \ref{L}) one cannot formulate a corresponding limiting R-H problem with constant jump matrices as it is usually done. The natural question arises, whether the local parametrix problems converge in any other sense as the degree $n$ goes to infinity. A hypothetical limiting R-H problem would likely determine the behaviour of the eigenvalue statistics near the edge of the spectrum. 

Our work suggests that such a limiting R-H problem might not exist, as the nonlinear steepest descent method can be rigorously applied without computing any limit of the local parametrix problems. This leaves open the possibility that there are ensembles of random matrices, corresponding to the weight functions described in this paper, that exhibit universality in the bulk but not at the edge of the spectrum. More research is needed to formalize and prove or disprove this statement.

Another future challenge would be proving the a priori $L^p$-estimate in different settings, for example long-time asymptotics of nonlinear PDEs solvable via scattering theory or Placherel--Rotach asymptotics of orthogonal polynomials on the real line. 
\appendix
\section{Airy R-H problem}
The Airy R-H problem occurs in the theory of orthogonal polynomials, inverse scattering and Painlev\'e transcendents (\cite{BI}, \cite[Ch.~5]{PD}, \cite{DKMVZ3}, \cite{DKMVZ2}, \cite{Painleve},  \cite{ParametrixFinal}). We choose this example to illustrate the heuristics by which an explicit solution can be found, as the underlying Airy differential equation is particularly simple. The contour $\Sigma^\cA$ is a union of four rays $\Sigma_i^\cA$, $i = 1,..,4$, in the complex plane that meet at the origin as is shown in Figure \ref{airycontour}.
\begin{figure}[H] 
\begin{picture}(7,5.2)
\put(-0.3,2.5){\line(1,0){7.4}}
\put(5.5,2.5){\vector(1,0){0.2}}
\put(1,2.5){\vector(1,0){0.2}}

\put(1,0){\line(1,1){2.5}}
\put(2.1,1.1){\vector(1,1){0.2}}

\put(1,5){\line(1,-1){2.5}}
\put(2.1,3.9){\vector(1,-1){0.2}}

\put(6,2.7){$\Sigma^\cA_1$}
\put(2,4.3){$\Sigma^\cA_2$}
\put(0,2.7){$\Sigma^\cA_3$}
\put(2,0.5){$\Sigma^\cA_4$}

\put(4.8,4.2){$\Omega_1$}
\put(0,4){$\Omega_2$}
\put(0,1){$\Omega_3$}
\put(4.8,0.5){$\Omega_4$}

\put(3.5,2.55){$0$}
\put(7.2,2.4){$\R$}

\end{picture}
\caption{\label{airycontour} Contour for the Airy parametrix problem}
\end{figure}
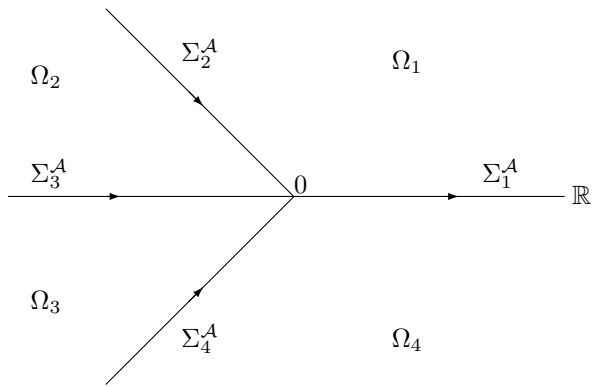
To the above contour we associate the following piecewise constant jump matrix $v_\cA$:
\begin{align}
v_\cA(k) := 
\begin{cases}
\begin{pmatrix}
1 & -\I
\\
0 & 1
\end{pmatrix}, \ &k \in \Sigma_1^\cA,
\\
\begin{pmatrix}
1 & 0
\\
\I & 1
\end{pmatrix}, \ &k \in \Sigma^\cA_2,
\\
\begin{pmatrix}
0 & -\I
\\
-\I & 0
\end{pmatrix}, \ &k \in \Sigma^\cA_3,
\\
\begin{pmatrix}
1 & 0
\\
\I & 1
\end{pmatrix}, \ &k \in \Sigma^\cA_4.
\end{cases}
\end{align}
The Airy R-H problem is stated as follows:
\\
\\
Find a $2 \times 2$ matrix-valued function $\Upsilon$ such that
\vspace{10pt}
\begin{enumerate}[(i)]
\item $\Upsilon(z)$ is analytic for $z \in \C \setminus \Sigma^\cA$, 
\\
\item $\Upsilon_+(k) = \Upsilon_-(k)v_\cA(k)$, \ for $k \in \Sigma^\cA$
\\
\item $\Upsilon(z) \rightarrow z^{-\sigma_3/4} \dfrac{\E^{\pi \I/12}}{2\sqrt{\pi}}
\bigg(\begin{pmatrix}
1 & 1
\\
-1 & 1
\end{pmatrix} +o(1)\bigg)
\E^{-(2/3) z^{3/2}\sigma_3}$, \quad uniformly as $z \rightarrow \infty.
$ 
\end{enumerate}
\vspace{10pt}
Here $z^{3/2}$ has a branch cut on $\R_-$. 

The jump matrices satisfy the cyclic condition, meaning their cyclic product when taking into account the contour orientation evaluates to the identity matrix:
\beq
\begin{pmatrix}
1 & -\I
\\
0 & 1
\end{pmatrix}
\begin{pmatrix}
1 & 0
\\
\I & 1
\end{pmatrix}^{-1}
\begin{pmatrix}
0 & -\I
\\
-\I & 0
\end{pmatrix}^{-1}
\begin{pmatrix}
1 & 0
\\
\I & 1
\end{pmatrix}^{-1}= \id.
\eeq
In \cite[Sect.~3.2]{QW} it is shown that this implies that solutions without any normalization at infinity are in a bijective correspondence with entire matrix-valued functions. More specifically, provided a solution $\Upsilon$ exists, there is a unique entire matrix-valued function $E$ with $\Upsilon(z) = E(z)$ for $z \in \Omega_1$, such that: 
\beq \label{PE}
\Upsilon(z) =
\begin{cases}
E(z), \ &z \in \Omega_1,
\\
E(z)
\begin{pmatrix}
1 & 0
\\
-\I & 1
\end{pmatrix}, \ &z \in \Omega_2,
\\
E(z)
\begin{pmatrix}
0 & \I
\\
\I & 1
\end{pmatrix}, \ &z \in \Omega_3,
\\
E(z)
\begin{pmatrix}
1 & \I
\\
0 & 1
\end{pmatrix}, \ &z \in \Omega_4.
\end{cases}
\eeq

Next we try to find the explicit solution. For this it is crucial that the jump matrix $v_\cA$ is piecewise constant. This implies that $\Upsilon''$ also satisfies the Airy R-H problem, except for the condition (iii), meaning that
\beq
F(z) := \Upsilon''(z) \Upsilon(z)^{-1}
\eeq
must have no jumps and therefore be an entire matrix-valued function. Looking at the leading term of $\Upsilon$ at infinity, one might wrongly conclude that $F(z)$ is equal to $z$ times the identity matrix. This would imply that all entries of $\Upsilon$ satisfy the Schr\"odinger equation with a linear potential
\beq \label{SE}
-\psi'' + z \psi = 0.
\eeq
in each sector $\Omega_i$. This will turn out to be incorrect. 

Still, we are lead to consider special solutions of \eqref{SE}, as they might lead to the right asymptotical behaviour at infinity.
One solution is the Airy function $\Ai(z)$  which satisfies \cite[Eq.~ 9.7.5]{dlmf}
\beq \label{asymAi}
\Ai(z) = \dfrac{1}{2\sqrt{\pi}}\Big(z^{-1/4}+ O(z^{-7/4})\Big) \E^{-2/3 z^{3/2}}, 
\hspace{10pt} |\arg(z)| < \pi,
\eeq
as $z \rightarrow \infty$, where all roots have branch cuts on $\R_-$. The convergence at infinity is uniform in any closed sector not containing $\R_-$. Similarly, for the first derivative of $\Ai$ \cite[Eq.~9.7.6]{dlmf} we get:
\beq \label{asym2Ai}
\Ai'(z) = -\dfrac{1}{2\sqrt{\pi}}\Big(z^{1/4}+ O(z^{-5/4}) \Big) \E^{-2/3 z^{3/2}},
\hspace{10pt} |\arg(z)| < \pi,
\eeq
with the same constraints on the convergence  rate at infinity. Note that $\Ai'$ is no longer a solution of \eqref{SE}. 

As the functions $\Ai$ and $\Ai'$ are entire, we can use them to build the entire matrix-valued function $E(z)$. In order to match the asymptotics of $\Upsilon$ in $\Omega_1$ as $z \rightarrow \infty$, we conjecture that:
\beq
E(z) = 
\begin{pmatrix}
\Ai(z) & \Ai(\xi^2 z)
\\
\Ai'(z) &  \xi^2\Ai'(\xi^2 z)
\end{pmatrix}\E^{\pi \I \sigma_3 /12 },
\eeq
with $\xi := \E^{2\pi\I/3}$. It turns out that while the asymptotics of the Airy functions hold uniformly only away from the negative real axis, the asymptotics of $\Upsilon$ given by \eqref{PE} will be uniform in all directions and have the required form. This follows from the connection formula \cite[9.2.12]{dlmf}:
\beq \label{relAi}
\Ai(z) + \xi\Ai(\xi z) + \xi^2\Ai(\xi^2 z) = 0.
\eeq
Note that $\Ai(\E^{\I \varphi} z)$ has an asymptotic expansion at infinity which holds uniformly away from the ray with angle $\pi-\varphi$.

As usual, using a Liouville type argument one can show that the solution $\Upsilon$ is unique, confirming the prior statement, that not all entries of $\Upsilon$ can be locally solutions of \eqref{SE}. In fact we can now directly compute
\beq
F(z) = 
\begin{pmatrix}
z & 0
\\
1 & z
\end{pmatrix}.
\eeq

\section{Local a priori \texorpdfstring{$L^p$}{}-estimate}
Let us consider the general setting of Section 3, with the additional assumption that $\Sigma^{exp}$ is unbounded. This was not the case in our application to orthogonal polynomials on $[-1,1]$, for further examples of R-H problems having a bounded $\Sigma^{exp}$ see (\cite{BaikDeift}, \cite{KamvissisToda}, \cite{KT}). However, there are numerous examples of R-H problems having an unbounded $\Sigma^{exp}$ (\cite{RAREFACTION}, \cite{DKMVZ2}, \cite{DKMVZ1}, \cite{DZ}, \cite{DZNLS}, \cite{DZPainleve}, \cite{DZaPriori}, \cite{EPT}, \cite{GT}, \cite{KotlyarovmKdV}, \cite{JL1}, \cite{ParametrixFinal}). 
Figure \ref{fig3} displays the contour in the case the KdV equation with steplike initial data \cite{EPT}, where the solid part is $\Sigma^{mod}$ and the dashed $\Sigma^{exp}$. We see that $\Sigma^{exp}$ extends to $\pm \infty$.

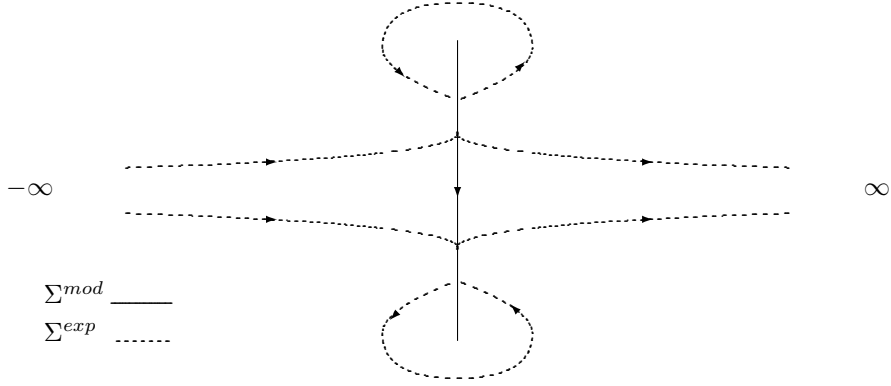
\begin{figure}[ht]
\begin{picture}(8,4.5)
\put(4,0){\line(0,1){4}}
\put(4,2){\vector(0,-1){0.1}}

\put(6.5,2.38){\vector(1,0){0.1}}
\put(6.5,1.62){\vector(1,0){0.1}}
\put(1.5,2.38){\vector(1,0){0.1}}
\put(1.5,1.62){\vector(1,0){0.1}}

\put(4.8,3.62){\vector(1,1){0.1}}
\put(3.2,3.61){\vector(1, -1){0.1}}
\put(4.8,0.39){\vector(-1,1){0.1}}
\put(3.2,0.38){\vector(-1,-1){0.1}}
\put(-2,1.95){$-\infty$}
\put(9.4,1.95){$\infty$}
\put(-1.5,0.5){$\Sigma^{mod}$} 
\curve(-0.6,0.5, 0.2,0.5)

\curvedashes{0.05,0.05}
\curve(4,3.2, 3,3.9, 4,4.5, 5,3.9, 4,3.2)
\curve(4,0.8, 3,0.1, 4,-0.5, 5,0.1, 4,0.8)
\curve(4,2.8, 5,2.5, 8.5,2.3)
\curve(4,1.2, 5,1.5, 8.5,1.7)
\curve(4,2.8, 3,2.5, -0.5,2.3)
\curve(4,1.2, 3,1.5, -0.5,1.7)

\put(-1.5,-0){$\Sigma^{exp}$} 
\curve(-0.6,-0, 0.2,-0)
\end{picture}
\vspace{10pt}
  \caption{Contour for the KdV equation with steplike initial data}\label{fig3}
\end{figure}

The unboundedness assumption on $\Sigma^{exp}$ poses a hurdle for obtaining the a priori $L^p$-estimate. Namely, given that $S(z) \rightarrow \id$, as $z \rightarrow \infty$, we see that
\beq \label{aprioriInfty}
\Vert S_- \Vert_{L^p(\Sigma^{exp})} = \infty, \quad \mbox{for} \ p \in (1,\infty).
\eeq
Luckily, it turns out that only local $L^p$-estimates around the points $\kappa \in \cK$ are needed. To show this, we choose a bounded domain $\Delta \subset \C$, which contains all the oscillatory points $\kappa \in \cK$. Next, write $\Sigma^{exp}$ as a disjoint union of an unbounded part $\Sigma^{exp}_\infty$ and a bounded part $\Sigma^{exp}_\cK$ with $\cK \subset \Sigma^{exp}_\cK$:
\beq
\Sigma^{exp}_\infty := \Sigma^{exp} \setminus \Delta, \quad \quad \Sigma^{exp}_\cK := \Sigma^{exp} \cap \Delta.
\eeq
We make the additional assumption
\beq \label{momentsW}
\Vert k^{i}w_\cS(k,n) \Vert_{L^1(\Sigma_\infty^{exp})} = O(\E^{-cn})
\eeq
for some positive $c$ and $i = 0,\dots,\ell-1$, where $\ell$ will play the same role as in Theorem \ref{mainTheorem}. The condition \eqref{momentsW} is satisfied in most applications. 

We can now introduce the following R-H problem with data $(v_{\cG}, \Sigma^\cG)$, where $\Sigma^\cG := \Sigma^{mod} \cup \Sigma^{exp}_\infty$ and the jump matrix is given by $v_\cG(k) = 
v_\cS(k)$ for $k \in \Sigma^\cG$.
\\
\\
Find a $2 \times 2$ matrix-valued function $G$ such that
\vspace{10pt}
\begin{enumerate}[(i)]
\item $G(z)$ is analytic for $z \in \C \setminus \Sigma^\cG$, 
\\
\item $G_+(k) = G_-(k)v_\cG(k)$, \ for $k \in \Sigma^\cG$
\\
\item $G(z) = \id + O(z^{-1}), \quad \mbox{as } z \rightarrow \infty
$. 
\end{enumerate}
\vspace{10pt}
The solution $G$ will be later used as a substitute for the model solution $N$. From \eqref{momentsW} it follows that
\beq \label{ws1}
\Vert v_\cG - v_\cN \Vert_{L^1(\Sigma)} = \Vert w_\cS \Vert_{L^1(\Sigma^{exp}_\infty)} =  O(\E^{-cn})
\eeq 
Moreover, in Section 3 it was assumed that $|w_\cS(k,n)| = O(\E^{-cn})$, without loss of generality with the same $c$, uniformly away from the points $\kappa \in \cK$. Hence, we also have
\beq \label{wsinfty}
\Vert v_\cG - v_\cN \Vert_{L^\infty(\Sigma)} = \Vert w_\cS \Vert_{L^\infty(\Sigma^{exp}_\infty)} =  O(\E^{-cn}).
\eeq
Let us now take a look at the corresponding singular integral equations for both R-H problems (cf. Section 2.1)
\beq \label{singG}
(\id -\cC^{\Sigma}_{w_\cG}) \Phi = \cC_-^{\Sigma}(w_\cG),
\eeq
\beq \label{singN}
(\id -\cC^{\Sigma}_{w_\cN}) \Phi = \cC_-^{\Sigma}(w_\cN),
\eeq
where $w_\cG := v_\cG - \id$, and for notational convenience we regard everything defined on the larger contour $\Sigma$. Note that
\begin{align} \label{GN}
\Vert (\id -\cC^{\Sigma}_{w_\cG})-(\id -\cC^{\Sigma}_{w_\cN}) \Vert_{L^q(\Sigma)} &= \Vert \cC^{\Sigma}_{w_\cN-w_\cG} \Vert_{L^q(\Sigma)} 
\\\nonumber
&\leq C(q) \Vert w_\cS \Vert_{L^\infty(\Sigma^{exp}_\infty)} = O(\E^{-cn}).
\end{align}
Moreover, as for $q \in (1,4)$ a unique solution $\Phi_\cN := N_--\id$ of \eqref{singN} exists, we know that $\id -\cC^{\Sigma}_{w_\cN}$ must be invertible as an operator on $L^q(\Sigma)$. As the set of invertible operators is open in the operator norm topology, it follows from \eqref{GN} that for $n$ large enough $\id -\cC^{\Sigma}_{w_\cG}$ is uniformly invertible on $L^q(\Sigma)$. Moreover, using \eqref{ws1} and \eqref{wsinfty} we see that 
\beq
\Vert w_\cG - w_\cN \Vert_{L^q(\Sigma)} = \Vert w_\cS \Vert_{L^q(\Sigma^{exp}_\infty)} =  O(\E^{-cn}).
\eeq
Altogether this implies that for $n$ large enough the unique solution $\Phi_\cG$ of \eqref{singG} satisfies
\beq
\Vert \Phi_\cG - \Phi_\cN \Vert_{L^q(\Sigma)} = O(\E^{-cn}). 
\eeq
In particular, $\Vert \Phi_\cG \Vert_{L^q(\Sigma)}$ is uniformly bounded as $\Phi_\cN$ is $n$-independent.

As the goal will be to use $G$ as a substitute of the model solution $N$, we need to understand the behaviour of $G$ in the vicinities of the oscillatory points $\kappa \in \cK$. As described in Section 2.2, we can relate $G$ and $N$ via:
\beq
G(z,n) = \Bigg( \id + \dfrac{1}{2\pi \I} \int_{\Sigma_\infty^{exp}} \dfrac{G_-(k,n)w_\cS(k,n) N^{-1}(k)}{k-z} \ dk \Bigg) N(z).
\eeq
Note that
\beq
\Vert G_-w_\cS N^{-1} \Vert_{L^1(\Sigma^{exp}_\infty)} = \Vert (\Phi_\cG + \id)w_\cS N^{-1} \Vert_{L^1(\Sigma^{exp}_\infty)}
\eeq
\beq \nonumber
\leq \Big( \Vert \Phi_\cG \Vert_{L^2 (\Sigma^{exp}_\infty)} \Vert w_\cS \Vert_{L^2 (\Sigma^{exp}_\infty)} + \Vert w_\cS \Vert_{L^1 (\Sigma^{exp}_\infty)} \Big) \Vert N^{-1} \Vert_{L^\infty (\Sigma^{exp}_\infty)} = O(\E^{-cn}),
\eeq
which follows from the uniform boundedness of $\Vert \Phi_\cG \Vert_{L^2(\Sigma)}$, and $|N^{-1}(k)|$ away from the points $\kappa \in \cK$. Hence, we conclude that
\beq \label{GNrelation}
G(z,n) = (\id + O(\E^{-cn}\dist(z,\Sigma^{exp}_\infty)^{-1}))N(z).
\eeq
In particular, we see that $G$ has locally around $\kappa \in \cK$ the same behaviour as $N$,
\beq
\label{Gkappa}
| G(z,n) | = O(| z - \kappa |^{-1/4}),
\eeq
uniformly for $n \rightarrow \infty$. However, $G(z,n)$ might not be uniformly bounded away from the points $\kappa \in \cK$, as it can blow up near the contour $\Sigma^{exp}_\infty$. 

Let us now reconsider \eqref{CS}, but with $G$ instead of $N$:
\begin{align} 
\big \Vert S_-(v_\cS-v_\cG)G_+^{-1} \big \Vert_{L^1(\Sigma)}
&=
\big\Vert S_-w_\cS G^{-1} \big \Vert_{L^1(\Sigma^{exp}_\cK)}
\notag\\
&\leq \Vert S_- \Vert_{L^p(\Sigma^{exp}_\cK)} 
\big \Vert w_\cS G^{-1} \big \Vert_{L^q(\Sigma^{exp}_\cK)}.
\end{align}
Because of $\det G \equiv 1$ and the boundedness of $\Sigma^{exp}_\cK$, we have
\beq
\Vert G^{-1} \Vert_{L^q(\Sigma^{exp}_\cK)} = \Vert G \Vert_{L^q(\Sigma^{exp}_\cK)} = \Vert \Phi_\cG + \id \Vert_{L^q(\Sigma^{exp}_\cK)} \leq C < \infty \eeq
 Together with the local assumption \eqref{vkappa} on $w_\cS$ and \eqref{Gkappa}, it follows that
\beq
\Vert k^{i}w_\cS(k,n) G^{-1}(k)  \Vert_{L^q(\Sigma^{exp}_\cK)} = O(n^{\frac{1}{4\chi}-\frac{1}{q\chi }}), \quad q \in [1,4), \ i \in \mathbb{N}_0,
\eeq
which is in line with \eqref{estimateN}. Moreover, we see that we are now required to find only a local a priori $L^p$-estimate of the form
\beq \label{localapriori}
\Vert S_- \Vert_{L^p(\Sigma^{exp}_\cK)} = O(n^r). 
\eeq
Thus, Theorem \ref{mainTheorem} remains valid with only the local a priori $L^p$-estimate for $S_-$ \eqref{localapriori} instead of \eqref{apriori} and $G$ substituted for $N$. In fact, because of \eqref{GNrelation} the two relations \eqref{Sasymptotics1} and \eqref{Sasymptotics2} between $S$ and $N$ continue to be true and Theorem \ref{mainTheorem} holds in its original form with condition \eqref{localapriori} instead of \eqref{apriori}.
\\
\\
\noindent{\bf Acknowledgments.} The author thanks Andrei Mart\'inez-Finkelshtein for valueable discussions on this topic and related literature and Gerald Teschl for useful comments on the manuscript.

\newpage

\end{document}